\newtheorem{satz}{Theorem}
\newtheorem{theorem}[satz]{Theorem}
\newtheorem{lemma}[satz]{Lemma}
\newtheorem{corollary}[satz]{Corollary}
\newtheorem{remark}[satz]{Remark}
\def\Z{\mathbb {Z}}
\def\E{\mathsf {E}}
\def\F{\mathbb {F}}
\def\a{\alpha}
\def\G{\Gamma}
\def\a{\alpha}
\def\d{\delta}
\def\D{\Delta}
\def\({\big (}
\def\){\big )}
\def\ls{\leqslant}
\def\gs{\geqslant}
\def\_phi{\varphi}
\def\eps{\varepsilon}
\def\Gr{{\mathbf G}}
\def\FF{\widehat}
\def\tr{{\rm tr\,}}
\def\ge{\gs}
\def\le{\ls}
\def\la{\lambda}
\def\SL{{\rm SL}}
\def\U{{\rm U}}
\def\B{{\rm B}}
\begin{document}

\title{On a modular form of Zaremba's conjecture}

\author{Nikolay G. Moshchevitin and Ilya D. Shkredov}
\footnote{This work is supported by the Russian Science Foundation under grant 19--11--00001.}
\address[Nikolay Moshchevitin]{Lomonosov Moscow State University, Division of Mathematics, Moscow, Russia, and
	Steklov Mathematical Institute, ul. Gubkina, 8, Moscow, Russia, 119991}
\email{moshchevitin@gmail.com}
\address[Ilya Shkredov]{Steklov Mathematical Institute, ul. Gubkina, 8, Moscow, Russia, 119991, and
	IITP RAS, Bolshoy Karetny per. 19, Moscow, Russia, 127994, and
	MIPT, Institutskii per. 9, Dolgoprudnii, Russia, 141701}
\email{ilya.shkredov@gmail.com}

\begin{abstract}
	We prove that for any prime $p$ there is a divisible by $p$ number $q = O(p^{30})$ such that for a certain positive integer  $a$ coprime with $q$ the ratio $a/q$ has bounded partial quotients.   
	In the other direction  we show that there is an absolute constant $C>0$ such that for any prime $p$ exist divisible by $p$ number $q = O(p^{C})$ and a number $a$, $a$ coprime with $q$ such that all partial quotients of the ratio $a/q$ are bounded by two. 
\end{abstract}

\maketitle

\section{Introduction}

\newcommand{\zk}{\ensuremath{\mathfrak{k}}}

Let $a$ and $q$ be two positive coprime integers, $0<a<q$. 
By the Euclidean algorithm, a rational $a/q$ can be uniquely represented as a regular continued fraction
\begin{equation}\label{exe}
\frac{a}{q}=[0;b_1,\dots,b_s]=
\cfrac{1}{b_1 +\cfrac{1}{b_2 +\cfrac{1}{b_3+\cdots +\cfrac{1}{b_s}}}}
% \frac{1}{\displaystyle{b_1+\frac{1}{\displaystyle{b_2+
% \frac{1}{\displaystyle{b_3 +\dots +
% \displaystyle{\frac{1}{b_{s}}} }}}}}} 
,\qquad b_s \ge 2.
\end{equation}

Assuming $q$ is known, we use $b_j(a)$, $j=1,\ldots,s=s(a)$ to denote the partial quotients of $a/q$; that is,
\[
\frac aq := [ 0; b_1(a),\ldots,b_{s}(a)].
\]

Zaremba's famous conjecture \cite{zaremba1972methode} posits that there is an absolute constant $\zk$ with the following property:
for any positive integer $q$ there exists $a$ coprime to  $q$ such that in the continued fraction expansion (\ref{exe}) all partial quotients are bounded:
\[
b_j (a) \le \zk,\,\, 1\le j  \le s = s(a).
\]
In fact, Zaremba conjectured that $\zk=5$.
For large prime $q$, even $ \zk=2$ should be enough, as conjectured by Hensley \cite{hensley_SL2}, \cite{hensley1996}.
This theme is rather popular especially at the last time, see, e.g., 
papers 	
\cite{bourgain2011zarembas,bourgain2014zarembas},  \cite{FK}, \cite{hensley1989distribution}--\cite{hensley1996}, 
%\cite{KanIV}, \cite{Korobov}, \cite{Mosh_A+B}, 
\cite{KanIV}--\cite{Mosh_A+B}, 
\cite{Nied} and many others. 
The history of the question can be found, e.g.,  in \cite{NG_S}. 
Here we obtain the following "modular"\, version of Zaremba's conjecture. 
The first theorem  in this direction was  
%formulated and 
%obtained 
proved by Hensley 
in \cite{hensley_SL2} and after that in  \cite{MOW_MIX}, \cite{MOW_Schottky}.

\begin{theorem}
	There is an absolute constant $\zk$ such that for any prime number  $p$ 
	there exist some positive integers $q = O(p^{30})$,  $q\equiv 0 \pmod p$ and  $a$, $a$ coprime with $q$ having  the property that the ratio $a/q$ has partial quotients  bounded by  $\zk$.  
\label{t:main_intr}
\end{theorem}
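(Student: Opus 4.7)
The plan is to recast the theorem in $\SL_2(\Z)$ via the standard matrix model of continued fractions. Setting $M_b := \begin{pmatrix} b & 1 \\ 1 & 0 \end{pmatrix}$, one has
$$M_{b_1} M_{b_2} \cdots M_{b_s} = \begin{pmatrix} q & q' \\ a & a' \end{pmatrix},$$
so that $a/q = [0;b_1,\ldots,b_s]$ has partial quotients bounded by $\zk$ exactly when the matrix lies in the semigroup $G_{\zk}$ generated by $M_1, \ldots, M_{\zk}$, with $q$ as the top-left entry. Since $\|M_{b_1} \cdots M_{b_s}\|_\infty \le (\zk+1)^s$, it is enough to produce, for every prime $p$, some $\gamma \in G_{\zk}$ of word length $s = O(\log p)$ with top-left entry divisible by $p$.

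The natural tool is expansion in $\SL_2(\F_p)$. For any $\zk \ge 2$, the matrices $M_1, \ldots, M_{\zk}$ together with their inverses generate a Zariski-dense subgroup of $\SL_2(\Z)$, and by the Bourgain--Gamburd theorem the Cayley graphs of $\SL_2(\F_p)$ with respect to their reductions form a family of expanders with a uniform spectral gap. Consequently $O(\log p)$ steps of the symmetric random walk produce words in the $M_b^{\pm 1}$ hitting every prescribed residue class modulo $p$; the real work is to upgrade this to a \emph{positive} word of comparable length. I would do so by the Bourgain--Kontorovich / Magee--Oh--Winter strategy: combining the Dolgopyat-type spectral gap for the twisted transfer operator of the Gauss map restricted to the alphabet $\{1, \ldots, \zk\}$ with Fourier analysis on $\SL_2(\F_p)$, one aims to deduce an effective lower bound of the shape
$$\#\{\gamma \in G_{\zk} : \|\gamma\|_\infty \le T, \ \gamma_{11} \equiv 0 \pmod p\} \gg \frac{T^{2\delta_{\zk}}}{p}, \qquad T \ge p^{A_0},$$
where $\delta_{\zk}$ is the Hausdorff dimension of the Cantor set of irrationals with partial quotients $\le \zk$ and $A_0$ is an absolute constant.

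Choosing $\zk$ large enough so that $\delta_{\zk}$ is close to $1$ and then taking $T = p^{30}$ then yields at least one valid $\gamma$, proving the theorem. The principal obstacle is securing the uniform-in-$p$ spectral gap with explicit polynomial dependence on $T$: this is precisely what fixes the value of the exponent ($30$ rather than a smaller number), and tightening it would require a finer analysis of the twisted transfer operator. A secondary point is to check that the codimension-one subvariety $\{\gamma_{11} = 0\} \subset \SL_2(\F_p)$ does not conspire with the discrepancy term; this is handled by a Fourier decomposition over the characters of $\F_p^\ast$ together with the spectral gap on each nontrivial frequency.
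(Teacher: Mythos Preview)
Your proposal is a sketch of the Magee--Oh--Winter / Bourgain--Kontorovich approach, which the paper explicitly cites as prior art and as a method \emph{different} from its own. The paper's proof is purely finite-field and combinatorial: it fixes $Q=p-1$, takes $A\subset\SL_2(\F_p)$ to be the reductions of the continued-fraction matrices coming from $F_M(Q)$ with even length, and then shows $A^n\cap\B\neq\emptyset$ for $n=30$. The engine is (i) a trace-counting lemma specific to continuants (Lemma on cyclical continuants) giving $\gg_M |A|/p$ distinct traces in $A$, (ii) a Helfgott-style conjugation argument bounding $|AAA|$ from below, and (iii) the new Theorem on $A^n\cap\B$ (the ``$8/3-c$'' result), which relies on the representation theory of $\SL_2(\F_q)$ and the Wiener norm of $\B$. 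An explicit optimisation over an auxiliary parameter $\a$ (balancing two inequalities \eqref{f:n_1} and \eqref{f:n_2}) pins down $n=30$. No transfer operators, no Dolgopyat estimates, no Bourgain--Gamburd expansion are used.

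The genuine gap in your write-up is the exponent. You assert that taking $T=p^{30}$ ``yields at least one valid $\gamma$'', but you never compute the constant $A_0$ in your claimed bound, and indeed you flag ``securing the uniform-in-$p$ spectral gap with explicit polynomial dependence'' as the principal obstacle---without resolving it. The MOW machinery does produce \emph{some} polynomial bound $q=O(p^C)$, but extracting an explicit $C$, let alone $C=30$, requires tracking all constants through the Dolgopyat argument, which is notoriously lossy; the paper remarks that the dependence of $q$ on $p$ from those methods is ``rather poor''. The number $30$ in the statement is not a placeholder: it is the output of the paper's specific optimisation, and your route gives no reason to land on it. As written, your proposal establishes the qualitative statement (some absolute $C$ exists) by quoting heavy external results, but not the quantitative one stated.
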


Also, we can say something nontrivial about finite continued fractions with $\zk=2$. 
It differs our paper from \cite{bourgain2011zarembas}, \cite{bourgain2014zarembas}, \cite{KanIV}, \cite{MOW_MIX}, \cite{MOW_Schottky}.

\begin{theorem}
	There is an absolute constant $C>0$ such that for any prime number  $p$ 
	there exist some positive integers $q = O(p^{C})$, $q\equiv 0 \pmod p$ and  $a$, $a$ coprime with $q$  having  the property that the ratio $a/q$ has partial quotients  bounded by  $2$.  
	\label{t:main_intr2}
\end{theorem}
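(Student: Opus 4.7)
The plan is to reduce Theorem~\ref{t:main_intr2} to an equidistribution statement for the semigroup
\[
\mathcal{S} = \langle M_1, M_2 \rangle_+ , \qquad M_b = \begin{pmatrix} b & 1 \\ 1 & 0 \end{pmatrix},
\]
taken modulo $p$. A word $w = M_{b_1}\cdots M_{b_s}$ of length $s$ in the alphabet $\{1,2\}$ produces the continued fraction $[0;b_1,\ldots,b_s] = p_s/q_s$, with the denominator $q_s$ appearing as one entry of $w$ and satisfying $q_s \leqslant (1+\sqrt{2})^s$. Consequently it is enough to find, for every prime $p$, some $s \leqslant C_0 \log p$ together with a $\{1,2\}$-word $w$ of length $s$ whose corresponding entry vanishes modulo $p$: such a word yields a denominator $q$ divisible by $p$ with $q \leqslant (1+\sqrt{2})^{C_0 \log p} = O(p^{C})$, and Theorem~\ref{t:main_intr2} follows.

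The task is therefore to prove that, after $s = C_0 \log p$ steps, the image of $\mathcal{S}$ in $GL_2(\mathbb{F}_p)$ meets the subvariety $V_p = \{g : g_{21} \equiv 0\}$. The counting heuristic is already favourable: there are $2^s$ words while $|GL_2(\mathbb{F}_p)| \sim p^4$ and $|V_p| \sim p^3$, so any reasonable equidistribution at scale $s = O(\log p)$ suffices. I would combine three ingredients to make this rigorous. First, Zariski density of $\langle M_1, M_2 \rangle$ inside $SL_2$ (after passing to products of even length to ensure determinant $+1$), together with the Matthews--Vaserstein--Weisfeiler strong approximation theorem, guarantees surjectivity of the reduction modulo $p$ for all large primes. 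Second, Bourgain--Gamburd-type spectral expansion for $SL_2(\mathbb{F}_p)$ yields $L^2$-flattening of the convolution $\mu^{*s}$, where $\mu$ is the uniform measure on the symmetric set $\{M_1, M_2, M_1^{-1}, M_2^{-1}\}$. Third, one needs to transfer this equidistribution from the symmetric walk to the one-sided walk on $\mathcal{S}$, either by a majorisation of weights or by a semigroup variant of the flattening argument in the style of Bourgain--Varju.

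The main obstacle is this third ingredient: the classical expansion machinery is stated for symmetric generating sets, and the inverses $M_b^{-1}$ have negative entries, so one cannot blindly incorporate them into a $\{1,2\}$-continued fraction. I would expect the authors to overcome this either by direct incidence and sum-product estimates in $\mathbb{F}_p$ (replacing spectral expansion by combinatorial counts, a circle of techniques in which the second author has considerable experience), or by first proving a structural statement about the distribution of positive-entry matrices among all products of the symmetric generators. The unspecified constant $C$ in the theorem precisely reflects the quantitative cost of this passage --- a cost one can afford because $C$ is not required to be small. Once equidistribution at the scale $s = C_0 \log p$ is in hand, existence of a word landing in $V_p$ follows by a straightforward Plancherel / pigeonhole argument, completing the proof.
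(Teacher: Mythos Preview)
Your plan is a reasonable high-level outline, but it is not a proof: you explicitly flag the decisive step --- passing from symmetric Bourgain--Gamburd expansion to the one-sided semigroup $\mathcal{S}=\langle M_1,M_2\rangle_+$ --- as an ``obstacle'' and then speculate about how the authors might handle it. That gap is exactly where the content lies. Strong approximation plus expander mixing for the \emph{group} generated by $M_1,M_2$ is standard and gives equidistribution of the symmetric walk, but neither a majorisation of weights nor a generic ``semigroup variant'' delivers the needed statement without real work (this is essentially the content of the Magee--Oh--Winter papers you allude to, and the present paper emphasises that its method is different from theirs).

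The paper's route avoids the symmetric/semigroup transfer entirely. It takes $A\subset\SL_2(\F_p)$ to be the set of matrices coming from $\{1,2\}$-continued fractions with denominator $\le p-1$ and even length, so $|A|\gg p^{2w_2}\gg p^{1.062}$, and then proves a Helfgott-type product theorem \emph{directly for this non-symmetric generating set}: Theorem~\ref{t:Misha_Je_large} gives $|A^3|\gg |A|^{1+c}$ without assuming $A=A^{-1}$, by tracking products such as $A^2A^{-1}AA^{-2}$ via Ruzsa's triangle inequality and the trace/conjugacy-class estimates of \cite{RS_SL2}. Iterating this growth pushes $|A^n|$ above $p^{2+\delta}$, after which a representation-theoretic result (Theorem~\ref{t:8/3-c}, built on the Fourier analysis of the Borel subgroup in Lemma~\ref{l:B_Wiener}) forces $A^n\cap\B\neq\emptyset$. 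Hitting $\B$ is exactly the condition $q_{s-1}\equiv 0\pmod p$, which yields the desired denominator $q=O(p^C)$.

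So the conceptual difference is: you propose spectral expansion (symmetric) plus an unspecified transfer to the semigroup; the paper instead proves growth for the asymmetric set $A$ from scratch and replaces ``equidistribution on $\SL_2(\F_p)$'' by the weaker, purely combinatorial target ``$A^n$ meets a fixed Borel subgroup'', which is what Theorem~\ref{t:8/3-c} supplies. Your approach, if completed via the semigroup expansion literature, would also work and is closer in spirit to \cite{MOW_MIX}, \cite{MOW_Schottky}; the paper's approach is more self-contained and bypasses the expander machinery altogether.
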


%The 
Our 
proof uses growth results in $\SL_2 (\F_p)$ and some well--known facts about the representation theory of $\SL_2 (\F_q)$. 
We study a combinatorial question about intersection of powers of a certain set of matrices $A \subseteq \SL_2 (\F_q)$  with an arbitrary   Borel subgroup and this seems like a new innovation.

In principle, results from \cite{hensley_SL2} can be written in a form  similar to Theorem \ref{t:main_intr} in an effective way but the dependence of $q$ on $p$ in 
%this paper 
\cite{hensley_SL2} 
is rather poor.  
Thus Theorem \ref{t:main_intr} can be considered as an explicit version (with very concrete constants)
of 
%previous 
Hensley's results as well as rather effective Theorem 2 from \cite{MOW_Schottky}.
Also, the methods of paper \cite{hensley_SL2} and papers \cite{MOW_MIX}, \cite{MOW_Schottky} are 
%completely 
very 
different from ours.  
%from \cite{hensley_SL2} where the first results in the direction were proved using completely different method. 

We thank I.D. Kan for  useful discussions and remarks.

\section{Definitions} 

Let $\Gr$ be a group with the identity $1$.
Given two sets $A,B\subset \Gr$, define  the \textit{product set}  of $A$ and $B$ as 
$$AB:=\{ab ~:~ a\in{A},\,b\in{B}\}\,.$$
In a similar way we define the higher product sets, e.g., $A^3$ is $AAA$. 
Let $A^{-1} := \{a^{-1} ~:~ a\in A \}$. 
The Ruzsa triangle inequality \cite{Ruz} says that
\[
	|C| |AB| \le |AC||C^{-1}B| 
\] 
for any sets $A,B,C \subseteq \Gr$. 
As usual, having two subsets $A,B$ of a group $\Gr$ denote by 
\[
	\E(A,B) = |\{ (a,a_1,b,b_1) \in A^2 \times B^2 ~:~ a^{-1} b = a^{-1}_1 b_1 \}| 
\]
the {\it common energy} of $A$ and $B$. 
Clearly, $\E(A,B) = \E(B,A)$ and by the Cauchy--Schwarz inequality 
\[
	\E(A,B) |A^{-1} B| \ge |A|^2 |B|^2 \,.
\]
We  use representation function notations like $r_{AB} (x)$ or $r_{AB^{-1}} (x)$, which counts the number of ways $x \in \Gr$ can be expressed as a product $ab$ or  $ab^{-1}$ with $a\in A$, $b\in B$, respectively. 
For example, $|A| = r_{AA^{-1}}(1)$ and  $\E (A,B) = r_{AA^{-1}BB^{-1}}(1) =\sum_x r^2_{A^{-1}B} (x)$. 
In this paper we use the same letter to denote a set $A\subseteq \Gr$ and  its characteristic function $A: \Gr \to \{0,1 \}$. 
We write $\F^*_q$ for $\F_q \setminus \{0\}$. 
The signs $\ll$ and $\gg$ are the usual Vinogradov symbols.
All logarithms are to base $2$.

\section{On the representation theory  of $\SL_2 (\F_p)$ and basis properties of its subsets}

First of all, we recall some notions and simple facts from the representation theory, see, e.g., \cite{Naimark} or \cite{Serr_representations}.
For a finite group $\Gr$ let $\FF{\Gr}$ be the set of all irreducible unitary representations of $\Gr$. 
It is well--known that size of $\FF{\Gr}$ coincides with  the number of all conjugate classes of $\Gr$.  
For $\rho \in \FF{\Gr}$ denote by $d_\rho$ the dimension of this representation. 
We write $\langle \cdot, \cdot \rangle$ for the corresponding  Hilbert--Schmidt scalar product 
$\langle A, B \rangle = \langle A, B \rangle_{HS}:= \tr (AB^*)$, where $A,B$ are any two matrices of the same sizes. 
Put $\| A\| = \sqrt{\langle A, A \rangle}$.
%$(d_\rho \times d_\rho)$--matrices. 
Clearly, $\langle \rho(g) A, \rho(g) B \rangle = \langle A, B \rangle$ and $\langle AX, Y\rangle = \langle X, A^* Y\rangle$.
Also, we have $\sum_{\rho \in \FF{\Gr}} d^2_\rho = |\Gr|$.

For any $f:\Gr \to \mathbb{C}$ and $\rho \in \FF{\Gr}$ define the matrix $\FF{f} (\rho)$, which is called the Fourier transform of $f$ at $\rho$ by the formula 
\begin{equation}\label{f:Fourier_representations}
\FF{f} (\rho) = \sum_{g\in \Gr} f(g) \rho (g) \,.
\end{equation}
Then the inverse formula takes place
\begin{equation}\label{f:inverse_representations}
f(g) = \frac{1}{|\Gr|} \sum_{\rho \in \FF{\Gr}} d_\rho \langle \FF{f} (\rho), \rho (g^{-1}) \rangle \,,
\end{equation}
and the Parseval identity is 
\begin{equation}\label{f:Parseval_representations}
\sum_{g\in \Gr} |f(g)|^2 = \frac{1}{|\Gr|} \sum_{\rho \in \FF{\Gr}} d_\rho \| \FF{f} (\rho) \|^2 \,.
\end{equation}
The main property of the Fourier transform is the convolution formula 
\begin{equation}\label{f:convolution_representations}
\FF{f*g} (\rho) = \FF{f} (\rho) \FF{g} (\rho) \,,
\end{equation}
where the convolution of two functions $f,g : \Gr \to \mathbb{C}$ is defined as 
\[
(f*g) (x) = \sum_{y\in \Gr} f(y) g(y^{-1}x) \,.
\]
Finally, it is easy to check that for any matrices $A,B$ one has $\| AB\| \le \| A\|_{o} \| B\|$ and $\| A\|_{o} \le \| A \|$, where  the operator $l^2$--norm  $\| A\|_{o}$ is just the absolute value of the maximal eigenvalue of $A$.  
In particular, it shows that $\| \cdot \|$ is indeed a matrix norm.

Now consider the group $\SL_2 (\F_q)$  of matrices 
\[
g=
\left( {\begin{array}{cc}
	a & b \\
	c & d \\
	\end{array} } \right) = (ab|cd) \,, \quad \quad a,b,c,d\in \F_q \,, \quad \quad ad-bc=1 \,.
\]
%which acts on $\F_p$ by 
%$$
%g z := \frac{az+b}{cz+d}  \,,\quad \quad z \in \F_p \,.
%$$
Clearly, $|\SL_2 (\F_q)| = q^3-q$.
Denote by $\B$ the standard  Borel subgroup of all upper--triangular matrices from  $\SL_2 (\F_q)$, by $\U \subset \B$ denote the standard unipotent subgroup of $\SL_2 (\F_q)$ of matrices $(1u|01)$, $u \in \F_q$  and by $\D \subset \B$ denote the subgroup of diagonal matrices. 
%It is well--known  \cite{Dickson} that 
$\B$ and all its conjugates form all maximal proper subgroups of $\SL_2 (\F_p)$.
Also, let $I_n$ be the identity matrix and $Z_n$ be the zero matrix of size $n\times n$. 
Detailed description of the representation theory of $\SL_2 (\F_q)$ can be found in \cite[Chapter II, Section 5]{Naimark}. 
We formulate the main result from  book \cite{Naimark} concerning this theme.

\begin{theorem}
	Let $q$ be an odd power. 
	There are $q+3$ nontrivial representations of $\SL_2 (\F_q)$, namely,\\\
$\bullet~$	$\frac{q-3}{2}$ representations $T_\chi$ of dimension $q+1$ indexed via $\frac{q-3}{2}$ nontrival multiplicative characters $\chi$ on $\F^*_q$, $\chi^2 \neq 1$,\\ 
$\bullet~$		a representation $\tilde{T}_1$ of dimension $q$,\\ 
$\bullet~$		two representations $T^{+}_{\chi_1}$, $T^{-}_{\chi_1}$ of dimension $\frac{q+1}{2}$, $\chi^2_1 = 1$, \\ 
$\bullet~$		two representations $S^{+}_{\pi_1}$, $S^{-}_{\pi_1}$ of dimension $\frac{q-1}{2}$, \\  
$\bullet~$		$\frac{q-1}{2}$ representations $S_\pi$ of dimension $q-1$ indexed via $\frac{q-1}{2}$ nontrival multiplicative characters $\pi$ on an arbitrary quadratic extension of $\F_q$, $\pi^2 \neq 1$.
\label{t:Naimark}
\end{theorem}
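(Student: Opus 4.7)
The plan is to match the count of conjugacy classes with the list of irreducible representations and then construct the latter explicitly. First I would enumerate the conjugacy classes of $\SL_2(\F_q)$ for odd $q$: the two central elements $\pm I_2$; two non-conjugate unipotent classes, represented by $(11|01)$ and $(1\eta|01)$ with $\eta$ a fixed non-square (these fuse only inside $\mathrm{GL}_2$); two negative-unipotent classes obtained by multiplying the former by $-I_2$; the $(q-3)/2$ split semisimple classes with representatives $\mathrm{diag}(t,t^{-1})$, $t \in \F_q^* \setminus \{\pm 1\}$, taken modulo $t \leftrightarrow t^{-1}$; and the $(q-1)/2$ non-split semisimple classes corresponding to norm-one elements of $\F_{q^2}^* \setminus \{\pm 1\}$ modulo Frobenius. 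This yields $q+4$ conjugacy classes, which equals $|\FF{\SL_2(\F_q)}|$ and matches $1+(q+3)$ in the statement.

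Next I would build the principal series from $\B$. Given a multiplicative character $\chi$ of $\D \cong \F_q^*$, extend it trivially across $\U$ to a character of $\B$ and set $T_\chi := \mathrm{Ind}_\B^{\SL_2(\F_q)} \chi$, which has dimension $[\SL_2(\F_q):\B] = q+1$. Computing the self-inner product of the induced character by Mackey's formula yields $1$ when $\chi^2 \ne 1$, with $T_\chi \cong T_{\chi^{-1}}$, producing the $(q-3)/2$ irreducibles of dimension $q+1$. The trivial $\chi$ gives the trivial representation plus the Steinberg $\tilde T_1$ of dimension $q$. For the unique quadratic character $\chi_1$ on $\F_q^*$ the squared norm of the induced character is $2$, so $T_{\chi_1}$ splits as $T^+_{\chi_1} \oplus T^-_{\chi_1}$ of dimension $(q+1)/2$ each, the two summands being distinguished by the eigenvalues of an explicit intertwiner built from the nontrivial Weyl element.

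The cuspidal representations---those not occurring in any induction from $\B$---are parametrized by characters $\pi$ of the anisotropic torus $T' \cong \{x \in \F_{q^2}^* \colon N_{\F_{q^2}/\F_q}(x)=1\}$ of order $q+1$. For $\pi^2 \ne 1$ one constructs $S_\pi$ of dimension $q-1$ either via the restriction of the Weil representation through the dual pair $(\SL_2, O(\mathrm{norm}))$, or by Deligne--Lusztig induction, with $S_\pi \cong S_{\pi^{-1}}$ giving $(q-1)/2$ representations. For the unique quadratic $\pi_1$ the same construction splits into $S^+_{\pi_1}, S^-_{\pi_1}$ of dimension $(q-1)/2$ each. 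The main obstacle is the clean identification of the four halved representations at the quadratic characters, where the obvious induced or Deligne--Lusztig object is reducible and exhibiting the splitting requires an explicit intertwiner; the Weil representation machinery handles this most cleanly. Finally one checks
\begin{equation*}
1^2 + q^2 + \tfrac{q-3}{2}(q+1)^2 + 2\Bigl(\tfrac{q+1}{2}\Bigr)^2 + 2\Bigl(\tfrac{q-1}{2}\Bigr)^2 + \tfrac{q-1}{2}(q-1)^2 = q^3 - q = |\SL_2(\F_q)|,
\end{equation*}
so by the Parseval identity \eqref{f:Parseval_representations} the list is exhaustive.
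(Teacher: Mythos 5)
Your outline is correct, and the numerology checks out: the class count $2+2+2+\frac{q-3}{2}+\frac{q-1}{2}=q+4$ matches the $1+(q+3)$ representations claimed, and the sum of squares of the listed dimensions does equal $q^3-q$. But note that the paper does not prove Theorem \ref{t:Naimark} at all: it is quoted verbatim from Naimark's book \cite[Chapter II, Section 5]{Naimark}, where the representations are built in explicit function-space models; that explicitness is not incidental, since the paper later uses the concrete matrices $\tilde{T}_1(u_b)$ and $\tilde{T}_1(g_\la)$ in a specific basis (Lemma \ref{l:B_Wiener}), which your more structural route would not immediately supply. Your argument is the standard modern reconstruction --- conjugacy-class enumeration, principal series $\mathrm{Ind}_\B^{\SL_2(\F_q)}\chi$ analysed by Mackey's criterion, the Steinberg summand at $\chi=1$, the splitting at the quadratic character via an explicit intertwiner, and the cuspidal series via the Weil representation or Deligne--Lusztig induction --- and it buys conceptual transparency and generality at the cost of delegating the genuinely hard part (construction of the $(q-1)$-dimensional cuspidal representations and of the four ``halved'' representations) to heavy machinery that you invoke rather than carry out. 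Two small points to tighten: exhaustiveness from the identity $\sum_\rho d_\rho^2=|{\Gr}|$ (or from the class count) requires knowing the listed representations are pairwise non-isomorphic, and while Mackey theory settles this inside the principal series and cuspidality separates the two families, distinguishing the $S_\pi$ from one another still needs a character (or Deligne--Lusztig orthogonality) computation; and the parametrization in the theorem's last bullet is really by characters of the norm-one torus of order $q+1$ modulo inversion, which is what your count $\frac{q-1}{2}$ implicitly uses. For the purposes of this paper only the dimension data ($d_{\min}=\frac{q-1}{2}$, $d_{\max}=q+1$) and the explicit model of $\tilde{T}_1$ are ever used, so either route suffices, but yours would need the extra step of exhibiting a concrete basis for $\tilde{T}_1$ to feed Lemma \ref{l:B_Wiener}.
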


By $d_{\min}$, $d_{\max}$ denote the minimum/maximum over dimensions  of all nontrivial representations of a group $\Gr$. 
%$\SL_2 (\F_q)$. 
Thus the result above tells  us that in the case $\Gr = \SL_2 (\F_q)$ these quantities differ  just in two times roughly. 
Below we assume that $q\ge 3$.

Theorem \ref{t:Naimark} has two consequences, although, a slightly weaker result than Lemma \ref{l:A^n_large} can be obtained via the classical Theorem of Frobenius \cite{Frobenius}, see, e.g.,  \cite{sh_as}. Originally, similar arguments were suggested in \cite{SX}.

\begin{lemma}
	Let $n\ge 3$ be an integer, $A\subseteq \SL_2 (\F_q)$ be a set and $|A| \ge 2(q+1)^2 q^{2/n}$.
	Then $A^n = \SL_2 (\F_q)$. 
	Generally, if for some sets $X_1, \dots, X_n \subseteq \SL_2 (\F_q)$ one has 
	\[
		\prod_{j=1}^n |X_j| \ge (2q (q+1))^n (q-1)^{2} \,, 
	\]
	then $X_1 \dots X_n = \SL_2 (\F_q)$.  
\label{l:A^n_large}
\end{lemma}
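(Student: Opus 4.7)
My plan is a Fourier-analytic argument on $\Gr := \SL_2(\F_q)$, using that every nontrivial irreducible representation has dimension at least $d_{\min} = (q-1)/2$, which is immediate from Theorem \ref{t:Naimark}. Set $\phi_k := X_1 * X_2 * \cdots * X_k$ and $c_k := \prod_{j=1}^k |X_j|/|\Gr|$. Since $r_{X_1\cdots X_n}(g) = \phi_n(g)$, it suffices to prove $\|\phi_n - c_n\|_\infty < c_n$, because then $\phi_n(g) > 0$ for every $g \in \Gr$.

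First I would derive an iterated $L^2$-bound. The convolution formula (\ref{f:convolution_representations}), the Parseval identity (\ref{f:Parseval_representations}) and the fact that the trivial Fourier coefficient of $\phi_k - c_k$ vanishes give
\[
\|\phi_k - c_k\|_2^2 = \frac{1}{|\Gr|} \sum_{\rho \ne 1} d_\rho \, \|\FF{X_1}(\rho) \FF{X_2}(\rho) \cdots \FF{X_k}(\rho)\|^2.
\]
Applying the submultiplicativity $\|AB\| \le \|A\|_o \|B\|$ together with the estimates $\|\FF{X_j}(\rho)\|_o^2 \le \|\FF{X_j}(\rho)\|^2 \le |\Gr||X_j|/d_\rho$ (the last inequality being Parseval for the indicator $X_j$), and using $d_\rho \ge d_{\min}$ for $\rho \ne 1$, an induction on $k \ge 2$ yields
\[
\|\phi_k - c_k\|_2^2 \le \frac{|\Gr|^{k-1}}{d_{\min}^{k-1}} \prod_{j=1}^{k} |X_j|.
\]

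Next I would transfer the $L^2$-bound at level $n-1$ into an $L^\infty$-bound at level $n$ by a pointwise Cauchy--Schwarz on the last convolution factor; since $\phi_n - c_n = (\phi_{n-1} - c_{n-1}) * X_n$,
\[
|(\phi_n - c_n)(g)| = \Big| \sum_y (\phi_{n-1}(y) - c_{n-1}) X_n(y^{-1} g) \Big| \le \|\phi_{n-1} - c_{n-1}\|_2 \sqrt{|X_n|}.
\]
Using $|\Gr| = q(q-1)(q+1)$ and $d_{\min} = (q-1)/2$, so that $|\Gr|/d_{\min} = 2q(q+1)$, this collapses to
\[
\|\phi_n - c_n\|_\infty^2 \le (2q(q+1))^{n-2} \prod_{j=1}^{n} |X_j|.
\]
The hypothesis $\prod_j |X_j| \ge (2q(q+1))^n (q-1)^2$ is exactly $4$ times the threshold $(2q(q+1))^{n-2}|\Gr|^2$ that makes this right-hand side equal to $c_n^2$, so in fact $\|\phi_n - c_n\|_\infty \le c_n/2$, which proves the general statement. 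The first statement is the special case $X_1 = \cdots = X_n = A$, since $|A|^n \ge 2^n (q+1)^{2n} q^2 \ge 2^n q^n (q+1)^n (q-1)^2 = (2q(q+1))^n (q-1)^2$ by the trivial inequality $(q+1)^n q^2 \ge q^n (q-1)^2$.

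The main delicate point is the transfer to $L^\infty$: the cruder bound $\|\phi_n - c_n\|_\infty \le \|\phi_n - c_n\|_2$ would lose an extra factor $|\Gr|/d_{\min}$, which is of order $q^2$, and would not match the constants stated in the lemma; peeling off the last convolution factor first via pointwise Cauchy--Schwarz saves precisely this factor and is what makes the Fourier bound sharp for the given threshold.
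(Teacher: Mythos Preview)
Your proof is correct and follows essentially the same Fourier-analytic route as the paper: both peel off $n-2$ operator-norm factors (each bounded via Parseval by $(|A||\Gr|/d_{\min})^{1/2}$) and then close with one application of Parseval, arriving at the identical error term $\left(|\Gr|/d_{\min}\right)^{(n-2)/2}\big(\prod_j |X_j|\big)^{1/2}$. The only cosmetic difference is that the paper bounds the nontrivial part of the inverse Fourier expansion directly, whereas you first record an $L^2$ bound on $\phi_{n-1}-c_{n-1}$ and then pass to $L^\infty$ by a physical-space Cauchy--Schwarz on the last convolution; these are the same computation in dual guises.
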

\begin{proof} 
	Using formula \eqref{f:Parseval_representations} with $f=A$, we have for an arbitrary nontrivial representation $\rho$ that 
\begin{equation}\label{f:Fourier_est}
	\| A \|_{o} < \left(\frac{|A| |\SL_2 (\F_q)|}{d_{\min}} \right)^{1/2} = \left(\frac{|A| (q^3-q)}{d_{\min}} \right)^{1/2}  \,.
\end{equation}
	Hence for any $x\in \SL_2 (\F_q)$ we obtain  via formulae \eqref{f:inverse_representations}, \eqref{f:Parseval_representations} 
	and estimate \eqref{f:Fourier_est} that
\[
	A^n (x) > \frac{|A|^n}{|\SL_2 (\F_q)|} - \left(\frac{|A| (q^3-q)}{d_{\min}} \right)^{(n-2)/2} |A| \ge  0 \,,
\]
	provided $|A|^n \ge 2^{n-2} (q+1)^n q^n (q-1)^2$.
	% \ge 2 (q+1)^2 q^{2/n}$. 
	The second part of the lemma can be obtained similarly.
	This completes the proof. 
$\hfill\Box$
\end{proof}

\begin{remark}
	It is easy to see (or consult Lemma \ref{l:B_Wiener} below) that bound \eqref{f:Fourier_est} is sharp, e.g., take $A=\B$. 
\end{remark}

\bigskip

For any function $f : \Gr \to \mathbb{C}$ consider the  Wiener norm of $f$ defined as 
\begin{equation}\label{def:Wiener}	
	\| f\|_W := \frac{1}{|\Gr|} \sum_{\rho \in \FF{\Gr}} d_\rho \| \FF{f} (\rho) \| \,. 
\end{equation}

\begin{lemma}
	We have $\| \B\|_W = 1$.  
	Moreover,  $\| \FF{\B} (\tilde{T}_1) \| = \| \FF{\B} (\tilde{T}_1) \|_o = |\B|$ and the Fourier transform of $\B$ vanishes on all other nontrivial representations.  
\label{l:B_Wiener}
\end{lemma}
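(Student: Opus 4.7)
The plan is to exploit that $\B$ is a subgroup, which forces $\widehat{\B}(\rho)$ to be a scalar multiple of an orthogonal projection. Setting $M_\rho := \widehat{\B}(\rho) = \sum_{g\in\B}\rho(g)$ for an irreducible unitary representation $\rho$, the subgroup property yields $\rho(h)M_\rho = M_\rho = M_\rho\rho(h)$ for every $h \in \B$, and hence $P_\rho := M_\rho/|\B|$ is idempotent. Because $\rho$ is unitary, it is in fact the orthogonal projection onto the subspace $V^\B_\rho$ of $\B$-invariant vectors in the representation space of $\rho$. Consequently $\|M_\rho\|_o = |\B|$ if $V^\B_\rho \ne \{0\}$ and $0$ otherwise, while in every case $\|M_\rho\| = |\B|\sqrt{\dim V^\B_\rho}$.

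Next I would determine which irreducible $\rho$ satisfy $V^\B_\rho \ne \{0\}$. By Frobenius reciprocity, $\dim V^\B_\rho$ equals the multiplicity of $\rho$ in the induced representation $\text{Ind}^{\SL_2(\F_q)}_\B 1$, which is the permutation representation of $\SL_2(\F_q)$ on the coset space $\SL_2(\F_q)/\B$ of cardinality $q+1$. Since this action is doubly transitive, the representation splits as the trivial representation plus a single irreducible summand of dimension $q$; by Theorem~\ref{t:Naimark}, the unique nontrivial irreducible of dimension $q$ is $\tilde{T}_1$. Thus $\dim V^\B_{\tilde{T}_1} = 1$ and $V^\B_\rho = \{0\}$ for every other nontrivial $\rho$.

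Combining these facts, $\widehat{\B}(\tilde{T}_1) = |\B|P$ for a rank-one orthogonal projection $P$, giving $\|\widehat{\B}(\tilde{T}_1)\|_o = \|\widehat{\B}(\tilde{T}_1)\| = |\B|$, while $\widehat{\B}(\rho) = 0$ for every other nontrivial $\rho$. Substituting into \eqref{def:Wiener} and remembering that the trivial representation (dimension one) also contributes $|\B|$, only two terms survive:
$$\|\B\|_W = \frac{1\cdot|\B| + q\cdot|\B|}{|\SL_2(\F_q)|} = \frac{(q+1)q(q-1)}{q^3-q} = 1.$$
The one step I expect to be slightly delicate is matching the nontrivial summand of the coset action with $\tilde{T}_1$; everything else follows automatically from the projection description of $\widehat{\B}(\rho)$.
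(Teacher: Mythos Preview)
Your argument is correct and is a genuinely different route from the paper's. The paper offers three proofs: the first is an energy/Parseval argument showing that $\|\FF{\B}(\rho)\|_o \in \{0,|\B|\}$ and then counting how many nontrivial representations can survive (it only reaches $\|\B\|_W\le 4$); the second explicitly writes down $\tilde{T}_1(u_b)$ and $\tilde{T}_1(g_\lambda)$ in a concrete basis from Naimark's book and computes $\FF{\B}(\tilde{T}_1)=q(q-1)I_1\oplus Z_{q-1}$ by hand, then uses Parseval to force all other coefficients to vanish; the third combines \eqref{f:Borel_lower} with dimension bookkeeping from Theorem~\ref{t:Naimark}. Your approach bypasses all of this by recognising at once that $\FF{\B}(\rho)/|\B|$ is the orthogonal projection onto $V_\rho^{\B}$ and invoking Frobenius reciprocity together with the double transitivity of $\SL_2(\F_q)$ on $\mathbb{P}^1(\F_q)\cong\SL_2(\F_q)/\B$. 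This is cleaner and more structural; the only external input you need is the uniqueness of the $q$--dimensional irreducible, which is exactly what Theorem~\ref{t:Naimark} supplies, so the step you flagged as ``delicate'' is in fact immediate. The paper's explicit computation, on the other hand, buys you a concrete matrix form of $\FF{\B}(\tilde{T}_1)$, which is not needed here but could be useful elsewhere.
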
 
\begin{proof}
	We 
	introduce 
	even three proofs  of upper and lower bounds of $\| \B\|_W$, although, the first and the third ones being shorter give slightly worse constants.
	Also, they do not provide full description of non--vanishing representations of $\B$.

	Since $\B$ is a subgroup, we see using  \eqref{f:Parseval_representations} twice that 
\[
	|\B|^2 = |\{b_1 b_2 = b_3 ~:~ b_1,b_2,b_3 \in \B\}| = \frac{1}{|\SL_2 (\F_q)|} \sum_{\rho \in \FF{\Gr}} d_\rho \langle \FF{\B}^2 (\rho), \FF{\B} (\rho) \rangle 
	\le
\]
\[ 
	\le
	\frac{1}{|\SL_2 (\F_q)|} \sum_{\rho} d_\rho \langle \FF{\B} (\rho), \FF{\B} (\rho) \rangle \| \FF{\B} (\rho)\|_{o} 
	\le
	 \frac{|\B|}{|\SL_2 (\F_q)|} \sum_{\rho} d_\rho \langle \FF{\B} (\rho), \FF{\B} (\rho) \rangle = |\B|^2 \,,
\]
	because, clearly, $\| \FF{\B} (\rho)\|_{o} \le |\B|$.  
	It means that for any representation $\rho$ either $\| \FF{\B} (\rho)\| = 0$ (and hence $\| \FF{\B} (\rho)\|_o = 0$) or $\| \FF{\B} (\rho)\|_{o} = |\B|$. 
	But another application of \eqref{f:Parseval_representations} gives us
\begin{equation}\label{tmp:01.10_1}
	|\B| = \frac{1}{|\SL_2 (\F_q)|} \sum_{\rho} d_\rho \|\FF{\B} (\rho) \|^2  
\end{equation}
	and hence the number $m$ of nontrivial representations $\rho$ such that $\| \FF{\B} (\rho)\|  \ge \| \FF{\B} (\rho)\|_{o} = |\B|$ is bounded in view of  Theorem \ref{t:Naimark} as 
\[
	|\B| \ge  \frac{|\B|^2}{|\SL_2 (\F_q)|} \left( 1 + \frac{m(q-1)}{2} \right) \,.
\]
	In other words, $m\le 2q/(q-1)$.
	Hence  
\begin{equation}\label{f:Borel_1-}
	\| \B\|_W \le \frac{|\B|}{|\SL_2 (\F_q)|} + \frac{m |\B|}{|\SL_2 (\F_q)|}\cdot d_{\max}  
	\le 
	\frac{|\B|}{|\SL_2 (\F_q)|} + \frac{2q (q+1) |\B|}{|\SL_2 (\F_q)|(q-1)} \le 4 \,.
\end{equation}
	A similar argument gives us a lower bound for $\| \B\|_W$ of the same sort.

	Let us give another proof which replaces $4$ to $1$ and uses the representation theory of $\SL_2 (\F_q)$ in a slightly more extensive way. 
	For $u_b \in \U$, $u_b=(1b|01)$, we have \cite[pages 121--123]{Naimark} that 
	in a certain orthogonal basis the following holds 
	$\tilde{T}_1 (u_b) = \mathrm{diag} (e(bj))$, $j=0,1,\dots,q-1$
	and for $g_\la = (\la 0|0 \la^{-1}) \in \D$ the matrix $\tilde{T}_1 (g_\la)$ is the direct sum of $I_1$ and a permutation matrix of size $(q-1) \times (q-1)$.
	Clearly, $\B = \D \U = \U \D$ and hence $\FF{\B} (\rho) = \FF{\D} (\rho) \FF{\U} (\rho)$ for any representation $\rho$.  
	But from above  $\FF{\U} (\tilde{T}_1)$ is the direct sum  $qI_1 \oplus Z_{q-1}$  and $\FF{\D} (\tilde{T}_1) = (q-1) I_1 \oplus 2\cdot J$, where 
	$J = (J_{ij})_{i,j=1}^{q-1}$ is a certain  $(q-1) \times (q-1)$ matrix with all components equal one for $i/j$ belonging to the set of quadratic residues   
%	(the last fact is not really important). 
	(such precise  description of $J$ is not really  important for us). 
	Hence  
	\[
		\FF{\B} (\tilde{T}_1) = \FF{\D} (\tilde{T}_1) \FF{\U} (\tilde{T}_1) = q(q-1) I_1 \oplus Z_{q-1} \,. 
	\]
	Thus $\| \FF{\B} (\tilde{T}_1) \| = \| \FF{\B} (\tilde{T}_1) \|_o = |\B|$. 
	Applying formula \eqref{tmp:01.10_1}, we obtain   
\begin{equation}\label{f:Borel_lower}
	|\B| \ge \frac{|\B|^2}{|\SL_2 (\F_q)|}  + \frac{q}{|\SL_2 (\F_q)|}  \|\FF{\B} (\tilde{T}_1) \|^2  
		= \frac{|\B|^2}{|\SL_2 (\F_q)|} (1 + q ) = |\B| \,.
\end{equation}
	It follows that for any other representations Fourier coefficients of $\B$ vanish. 
	Finally,
\begin{equation}\label{f:Borel_1}
	\| \B\|_W = \frac{|\B|}{|\SL_2 (\F_q)|} + \frac{q |\B|}{|\SL_2 (\F_q)|}  = 1 
\end{equation}
	as required.

	For the last proof it is enough to look at inequality \eqref{f:Borel_lower} and apply Theorem \ref{t:Naimark}, which gives that $\FF{\B}(T_\chi)$ must vanish thanks to  dimension of $T_\chi$.
	Further  
	if we have two nontrivial non--vanishing representations $S_\pi$ or $T^{\pm}_{\chi_1}$,  then it is again contradicts \eqref{f:Borel_lower} because  
	sum of their dimensions is  too large.
	Hence there is the only one nontrivial  non--vanishing representation (and calculations from the second proof show that it is indeed $\tilde{T}_1$) 
	or one of the following  pairs $(T^{\pm}_{\chi_1}, S^{\pm}_{\pi_1})$ or $(S^{+}_{\pi_1}, S^{-}_{\pi_1})$.    
	Thus a rough form of identity \eqref{f:Borel_1}, say, bound \eqref{f:Borel_1-} follows and, actually, we have not use any concrete basis in our first and the third
	% proof.  
	arguments. 
	This completes the proof of the lemma. 
$\hfill\Box$
\end{proof}

\begin{remark}
	One can show in the same way that an analogue of Lemma \ref{l:B_Wiener} takes place for any subgroup $\G$ of an arbitrary group $\Gr$, namely, 
	$\| \G\|_W \ll d_{\max}/d_{\min}$. 
\end{remark}

Lemma \ref{l:B_Wiener} gives us an alternative way to show that $A^3 \cap \B \neq \emptyset$.
% (we discuss the last question in details in the next Section). 
Indeed, just use estimate \eqref{f:Fourier_est} and write 
%applying the representation 
\[
	r_{A^3\B} (1) \ge \frac{|A|^3 |\B|}{|\SL_2 (\F_q)|} - \|\B \|_W \left(\frac{|A|(q^3-q)}{d_{\min}}\right)^{3/2} 
	=
	\frac{|A|^3 |\B|}{|\SL_2 (\F_q)|} - \left(\frac{|A|(q^3-q)}{d_{\min}}\right)^{3/2} > 0 \,,
\]
provided $|A| \gg q^{8/3}$. 
We improve this bound in the next section.

%One the other hand, without  exponent $2+2/n$ in Lemma \ref{l:A^n_large} cannot be significantly improved for large $n$. 
%Indeed, consider $x \B x^{-1}$, where $x$ is random. 
%Then estimating intersection of the expectation of $x \B x^{-1}$ and $\B$  (or see Lemma \ref{l:BgB} below), we obtain $|\B \cap x \B x^{-1} | \ll q$.
%% and hence $|A| \gg q^2$.
%Hence for any $n \ll q$ we have a set  $A \subseteq \B$ (so it does not contradicts the results of \cite{H}) 
%% for $ \B \cap x \B x^{-1}$ the following holds 
%such that $A^n \subseteq x \B x^{-1}$ and thus the intersection of $A^n$ and $\B$ is small.   

\section{On intersections of the product set with the Borel subgroup}

It was shown in the previous section (see Lemma \ref{l:A^n_large}) that for any $A \subseteq \SL_2 (\F_q)$ one has  $A^3 = \SL_2 (\F_p)$, provided $|A|^3 \gg q^8$ 
and in the same way 
%one can generalize slightly 
the last result 
%to 
holds for 
three different sets, namely, given  $X,Y,Z \subseteq \SL_2 (\F_q)$ 
%and obtain that 
with 
$|X||Y||Z| \gg q^8$, we have $XYZ=\SL_2 (\F_q)$. 
It is easy to see that in  this  generality the last result is sharp.
Indeed, let $X=S\B$, $Y=\B T$, where $S,T$ are two sets of sizes $\sqrt{q}/2$ which are chosen as $|X| \sim |S| |\B|$ and  $|Y| \sim |T| |\B|$
(e.g.,  take $S,T$ from left/right cosets of $\B$ thanks to the Bruhat decomposition).   
Then $XY=S\B T$, and hence $|XY| \le |S||T||B| \le |\SL_2 (\F_q)|/2$. 
Thus we take $Z^{-1}$ equals the complement to $XY$ in $\SL_2 (\F_q)$ and we see that the product set  $XYZ$ does not contain $1$  but $|X||Y||Z| \gg q^8$.

\bigskip 

Nevertheless, in  the "symmetric"\, case of the same set $A$ this $8/3$ bound can be improved, see Theorem \ref{t:8/3-c} below. 
We need a simple lemma and the proof of this result, as well as the proof of Theorem  \ref{t:8/3-c}
%, \ref{t:2-c} 
extensively play on non--commutative properties of $\SL_2 (\F_q)$.

\begin{lemma}
	Let $g\notin \B$ be a fixed element from $\SL_2 (\F_q)$. 
	Then for any $x$ one has 
\[
	r_{\B g\B} (x) \le q-1 \,.
\]
\label{l:BgB}
\end{lemma}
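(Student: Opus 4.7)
The plan is to invoke the Bruhat decomposition $\SL_2(\F_q) = \B \sqcup \B w \B$, where $w = (01|{-1}0)$ is the Weyl element. Since $g \notin \B$, we may write $g = b_0 w b_0'$ for some $b_0, b_0' \in \B$, and consequently $\B g \B = \B w \B$ is the ``big cell''. In particular $r_{\B g \B}(x) = 0$ unless $x \in \B w \B$, so it suffices to bound the representation function on the big cell.

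Next I would identify the relevant stabilizer. Suppose $b_1 g b_2 = b_1' g b_2'$ with all four factors in $\B$. Setting $h = (b_1')^{-1} b_1 \in \B$, this relation is equivalent to $h g = g \cdot (b_2' b_2^{-1})$ with $b_2' b_2^{-1} \in \B$, i.e.\ $h \in \B \cap g\B g^{-1}$; conversely each such $h$ produces a distinct alternative representation once one pair $(b_1, b_2)$ is fixed. Hence $r_{\B g \B}(x) \le |\B \cap g \B g^{-1}|$ for every $x$. Using $g = b_0 w b_0'$ one gets $g \B g^{-1} = b_0 (w \B w^{-1}) b_0^{-1} = b_0 \B^- b_0^{-1}$, where $\B^-$ denotes the opposite (lower triangular) Borel subgroup; since $b_0 \in \B$, conjugating inside $\B$ gives $\B \cap g \B g^{-1} = b_0 (\B \cap \B^-) b_0^{-1} = b_0 \D b_0^{-1}$, whose order is $|\D| = q-1$, as required.

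An even shorter route bypasses computing the intersection explicitly: because $r_{\B g \B}(x)$ is manifestly left and right $\B$--invariant, it is constant on the double coset $\B g \B = \B w \B$, and double counting pairs $(b_1,b_2) \in \B^2$ yields $r_{\B g \B}(x) \cdot |\B w \B| = |\B|^2$. The size of the big cell is $|\B w \B| = |\SL_2(\F_q)| - |\B| = q^2(q-1)$, while $|\B|^2 = q^2(q-1)^2$, so $r_{\B g \B}(x) = q - 1$ on $\B w \B$ and $0$ elsewhere. There is no genuine obstacle; the only points that need care are the equivalence of the stabilizer count with $\B \cap g \B g^{-1}$, which is a line of rewriting, and the identity $w\B w^{-1}=\B^-$, which is a direct $2\times 2$ computation.
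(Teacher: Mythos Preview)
Your argument is correct and complete; both the stabilizer computation and the double--counting shortcut are valid. However, the paper's proof is entirely different in flavor: it is a direct coordinate computation. Writing $g=(ab|cd)$ with $c\neq 0$ (since $g\notin\B$), it multiplies out
\[
\left(\begin{array}{cc}\lambda & u\\0&\lambda^{-1}\end{array}\right)
\left(\begin{array}{cc}a & b\\c&d\end{array}\right)
\left(\begin{array}{cc}\mu & v\\0&\mu^{-1}\end{array}\right)
=
\left(\begin{array}{cc}\alpha & \beta\\\gamma&\delta\end{array}\right)
\]
and reads off from the $(2,1)$ and $(1,1)$ entries that $\mu=\lambda\gamma c^{-1}$ and then $u$ (hence $v$) is determined by $\lambda$; since $\lambda$ ranges over $\F_q^*$ there are at most $q-1$ solutions.

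Your route is more structural: it identifies the representation count with the order of the subgroup $\B\cap g\B g^{-1}$ and evaluates this as $|\D|=q-1$ via Bruhat. This buys you more than the paper's proof does at this point --- you actually get the \emph{exact} value $r_{\B g\B}(x)=q-1$ on $\B g\B$ and $0$ off it, which in the paper is only obtained afterwards (Corollary following the lemma) by combining the inequality with Cauchy--Schwarz. The paper's computation, on the other hand, is self--contained and requires no prior knowledge of the Bruhat decomposition or of $w\B w^{-1}=\B^{-}$, which is perhaps why it was preferred in a paper aimed at readers from additive combinatorics.
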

\begin{proof} 
	Let $g=(ab|cd)$ and $x=(\a \beta |\gamma \d)$. 
	By our assumption $c\neq 0$.
	% and $\gamma \neq 0$. 
	We have 
\begin{equation}\label{tmp:16.10_1}
\left( {\begin{array}{cc}
	\la & u \\
	0 & \la^{-1} \\
	\end{array} } \right)
\left( {\begin{array}{cc}
	a & b \\
	c & d \\
	\end{array} } \right) 
\left( {\begin{array}{cc}
	\mu & v \\
	0 & \mu^{-1} \\
	\end{array} } \right) 
=
\left( {\begin{array}{cc}
	(\la a + u c)\mu  & * \\
	\mu c/\la & vc/\la + d/(\la \mu) \\
	\end{array} } \right) 
=
	\left( {\begin{array}{cc}
	\a & \beta \\
	\gamma & \d \\
	\end{array} } \right) \,.
\end{equation}
	In other words, $\mu = \la \gamma c^{-1} \neq 0$  (hence $\gamma \neq 0$ automatically) and from  
\[
	\a = (\la a + u c)\mu = \la \gamma c^{-1} (\la a + u c) 
\]
	we see that having $\la$ we determine $u$ uniquely (then, equation \eqref{tmp:16.10_1} gives us $\mu, v$ automatically). 
	This completes the proof. 
$\hfill\Box$
\end{proof}

\bigskip 

Lemma \ref{l:BgB} quickly implies a result on the Bruhat decomposition of  $\SL_2 (\F_q)$. 

\begin{corollary}
	Let $g\in \SL_2 (\F_q) \setminus \B$. 
	Then $\B g\B = \SL_2 (\F_q) \setminus \B$.
\end{corollary}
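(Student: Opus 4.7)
The plan is to prove the two inclusions separately, using Lemma \ref{l:BgB} for the harder direction.

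First I would check the easy inclusion $\B g \B \subseteq \SL_2(\F_q) \setminus \B$. If we had $b_1 g b_2 = b \in \B$ for some $b_1, b_2 \in \B$, then $g = b_1^{-1} b b_2^{-1}$ would lie in $\B$, contradicting the hypothesis. So $\B g \B$ is disjoint from $\B$, and it remains to show that $|\B g \B| = |\SL_2(\F_q) \setminus \B|$.

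For the cardinality, I would use the double counting identity $|\B|^2 = \sum_{x \in \B g \B} r_{\B g \B}(x)$. By Lemma \ref{l:BgB} each representation function value is at most $q-1$, so
\[
|\B g \B| \ge \frac{|\B|^2}{q-1} = \frac{q^2 (q-1)^2}{q-1} = q^2(q-1),
\]
where I used $|\B| = q(q-1)$ (the Borel subgroup consists of matrices $(ad^{-1}\,b\,|\,0\,d)$ with $d \in \F_q^*$, $b \in \F_q$). On the other hand
\[
|\SL_2(\F_q) \setminus \B| = q(q-1)(q+1) - q(q-1) = q^2(q-1),
\]
so combined with the inclusion from the first step, this forces $|\B g \B| = q^2(q-1)$ and $\B g \B = \SL_2(\F_q) \setminus \B$.

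The only step with any content is the lower bound on $|\B g \B|$, and the substantive input there is Lemma \ref{l:BgB}; everything else is an arithmetic check. There is no real obstacle — the key observation (already present in the lemma) is that the non-triviality of the bottom-left entry $c$ of $g$ pins down the Bruhat parameters essentially uniquely given one free parameter $\la \in \F_q^*$, which is exactly the slack that produces the multiplicity $q-1$ rather than $1$ or $q$.
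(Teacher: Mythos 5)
Your proof is correct and follows essentially the same route as the paper: disjointness of $\B g\B$ from $\B$ plus the multiplicity bound of Lemma \ref{l:BgB} to force $|\B g\B| \ge q^2(q-1) = |\SL_2(\F_q)\setminus\B|$. The only cosmetic difference is that you bound $|\B|^2 = \sum_x r_{\B g\B}(x) \le (q-1)|\B g\B|$ directly, whereas the paper routes the same count through the energy $\E(\B, g\B)$ and the Cauchy--Schwarz inequality, yielding the identical lower bound.
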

\begin{proof}
	Clearly, $\B \cap \B g\B = \emptyset$ because $g\in \SL_2 (\F_q) \setminus \B$. 
	On the other hand, by the Cauchy--Schwartz inequality and Lemma \ref{l:BgB}, we have 
\[
	|\B g\B| \ge \frac{|\B|^4}{\E(\B,g\B)} \ge \frac{|\B|^4}{(q-1) |\B|^2} = q^3 - q^2 = |\SL_2 (\F_q) \setminus \B| \,.
\]
	This completes the proof. 
$\hfill\Box$
\end{proof}

\bigskip 

Using growth of products of $\B$ as in the last corollary, one can combinatorially improve the constant $8/3$ (to do this combine Lemma \ref{l:A^n_large} and bound \eqref{f:Borel_sum-product} below).
We suggest another method which uses the representation theory of $\SL_2 (\F_q)$ more extensively and which allows to improve this constant  further.

\begin{theorem}
	Let $A\subseteq \SL_2 (\F_q)$ be a set, 
	%$|A| \ge 2 q^{23/9}$. 
	$|A| \ge 4 q^{18/7}$. 
	Then 
	%either $A\cap \B \neq \emptyset$ or  
	$A^3 \cap \B \neq \emptyset$. 
	Generally, $A^n \cap \B \neq \emptyset$ provided $|A| \ge 4 q^{2+\frac{4}{3n-2}}$. 
\label{t:8/3-c}
\end{theorem}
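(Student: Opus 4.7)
First, I would apply the Fourier inversion formula \eqref{f:inverse_representations} to $r_{A^n\B}(1)$. By Lemma~\ref{l:B_Wiener}, $\FF{\B}(\rho)$ vanishes on every nontrivial irreducible representation except the Steinberg $\tilde T_1$, where it has rank one: $\FF{\B}(\tilde T_1)=|\B|\,vv^*$ with $v$ the unit $\B$-spherical vector. Writing $R=\FF{A}(\tilde T_1)$, the Fourier expansion of $r_{A^n\B}(1)$ collapses to the single non-trivial term
\[
r_{A^n\B}(1)\;=\;\frac{|\B|}{|\SL_2(\F_q)|}\bigl(|A|^n+q\,\langle R^n v,v\rangle\bigr),
\]
so that it suffices to prove $|\langle R^n v,v\rangle|<|A|^n/q$ (with an explicit constant absorbed into the hypothesis $|A|\ge 4 q^{2+4/(3n-2)}$).

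Next, I would bound $|\langle R^n v,v\rangle|$ by iterated Cauchy--Schwarz combined with Parseval. The split $\langle R^n v,v\rangle=\langle R^j v,(R^*)^{n-j}v\rangle$ followed by Cauchy--Schwarz yields $|\langle R^n v,v\rangle|\le\|R^j v\|\cdot\|(R^*)^{n-j}v\|$. By the convolution formula \eqref{f:convolution_representations}, $R^m=\FF{A^{*m}}(\tilde T_1)$, where $A^{*m}(x)=r_{A\cdots A}(x)$; so Parseval \eqref{f:Parseval_representations} applied to the $m$-fold convolution gives
\[
\|R^m v\|^2\;\le\;\|R^m\|^2\;\le\;\frac{|\SL_2(\F_q)|}{q}\sum_x\bigl(A^{*m}(x)\bigr)^2.
\]
With the crudest moment bound $\sum_x(A^{*m}(x))^2\le|A|^{2m-1}$ and $j=n-1$, this reproduces only the $q^{8/3}$ threshold already sketched at the end of the previous section. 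To reach $q^{2+4/(3n-2)}$, I would upgrade the moment bound by a \emph{second} Fourier/Parseval estimate: using \eqref{f:Fourier_est} inside the sum $\sum_\rho d_\rho\|\FF{A}(\rho)^m\|^2=|\SL_2(\F_q)|\sum_x(A^{*m}(x))^2$ produces roughly
\[
\sum_x(A^{*m}(x))^2\;\ll\;\frac{|A|^{2m}}{|\SL_2(\F_q)|}\;+\;\Bigl(\frac{|\SL_2(\F_q)||A|}{d_{\min}}\Bigr)^{\!m-1}|A|,
\]
which is a genuine improvement over the trivial bound once $|A|$ is much larger than $q^2$. Then I would optimise $j$ and chain the estimates.

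The main obstacle is fitting the exponents: the scheme effectively performs $n-1$ Cauchy--Schwarz/Parseval steps --- which is where the denominator $3n-2$ in the claimed exponent will arise --- and one must ensure that at each step the gain of Parseval over the trivial bound is not wasted. Making the telescoping tight enough to bring the error down to $|A|^n\cdot q^{-1-4n/(3n-2)}$ is the delicate calculation. In the worst regime, where the refined moment bound is close to saturated, $A$ has exceptionally large multiplicative energy at some scale and hence small $|AA|$; since $|A|\ge 4q^{2+4/(3n-2)}>|\B|$ forbids $A$ from sitting inside any Borel, I would fall back on the Bruhat-type information from Lemma~\ref{l:BgB} and its corollary (namely $\B g\B=\SL_2(\F_q)\setminus\B$ with uniform multiplicity $q-1$ for $g\notin\B$) to force $A$ to spread across enough cosets of $\B$ to contradict the large-energy assumption, although the author's emphasis on "using representation theory more extensively" suggests that in fact the entire argument can be carried out within the Fourier framework.
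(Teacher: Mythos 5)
Your Fourier set-up (collapse of $r_{A^n\B}(1)$ to the single representation $\tilde T_1$, the rank-one structure $\FF{\B}(\tilde T_1)=|\B|vv^*$, and the reduction to showing $|\langle R^n v,v\rangle|<|A|^n/q$) matches the skeleton of the paper's argument, but the step you rely on to get below $q^{8/3}$ does not deliver. Your ``refined'' moment bound is obtained from Parseval together with the operator-norm estimate \eqref{f:Fourier_est}, which is exactly the same input as the crude bound: for the nontrivial part one has $\sum_{\rho\neq 1}d_\rho\|\FF{A}(\rho)^m\|^2\le \big(|A||\SL_2(\F_q)|/d_{\min}\big)^{m-1}|\SL_2(\F_q)||A|$, hence $\|R^m v\|\le\|R^m\|\ll q^{m}|A|^{m/2}$; the extra term $|A|^{2m}/|\SL_2(\F_q)|$ in your displayed estimate is just the trivial representation, which never contributes to $\|R^m\|$ in the first place. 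Chaining these bounds for any choice of $j$ gives $|\langle R^n v,v\rangle|\ll q^{n}|A|^{n/2}$ and therefore only the threshold $|A|\gg q^{2+2/n}$ (i.e.\ $q^{8/3}$ for $n=3$), never $q^{2+\frac{4}{3n-2}}$. Purely spectral data cannot do better here: the quantity that must be improved is $\|R^*v\|^2$, which up to normalization is the energy $\E(A,\B)$, and controlling it requires information on how $A$ meets individual cosets $g\B$, not another application of Parseval.

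The missing idea is the dichotomy on $\D=\max_{\eps=\pm1,\,g\notin\B}|A^{\eps}\cap g\B|$. When $\D$ is small one has the combinatorial bound $\E(A,\B)\le \D|\B||A|$, hence $\|\FF{A}^*(\tilde T_1)\FF{\B}(\tilde T_1)\|^2\le \D|\B||A||\SL_2(\F_q)|/q$, and it is this estimate (inserted into the count of solutions of $b_1a'a''ab_2=1$) that beats the generic bound, giving roughly $|A|^n\gg \D^2 q^{2n-2}$. When $\D$ is large no Fourier contradiction is available; the paper instead takes $g,\eps$ realizing $\D$, uses Lemma \ref{l:BgB} ($r_{\B g\B}(x)\le q-1$) to get $|\B A^{\eps}_g|\ge q\D$, and then solves $a_g(a'a'')^{\eps}=b\in\B$ via the asymmetric part of Lemma \ref{l:A^n_large}, giving roughly $|A|^{n-1}\D\gg q^{2n+1}$. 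Eliminating $\D$ between the two regimes is what produces the exponent $2+\frac{4}{3n-2}$; it does not arise from ``$n-1$ Cauchy--Schwarz steps.'' Your fallback scenario (large energy forces small $|AA|$, then Bruhat spreading) is not the correct complementary case: the obstruction is a single large intersection $|A^{\eps}\cap g\B|$, which is entirely compatible with $|A|>|\B|$ and with $A$ meeting many cosets, and your closing guess that the whole argument can stay inside the Fourier framework is exactly what fails. As written, the proposal proves only the $q^{2+2/n}$ threshold already recorded at the end of Section 3.
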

\begin{proof}
	Let $g\notin \B$ and put $A^{\eps}_g = A^{\eps} \cap g\B$, where $\eps \in \{ 1,-1\}$. 
	Also, let $\D = \max_{\eps,\, g\notin \B} |A^\eps_g|$. 
	Since we can assume $A \cap \B = \emptyset$, it follows that 
\begin{equation}\label{tmp:04.10_1-}
	\E(A, \B) = \sum_{x} r^2_{A^{-1} \B} (x) = \sum_{x \notin \B} r^2_{A^{-1} \B} (x) \le \D |\B| |A| 
	%\,. 
\end{equation}
	and similarly for $\E(A^{-1}, \B)$. 
	On the other hand, from \eqref{tmp:04.10_1-} and by the second part of  Lemma \ref{l:B_Wiener}, we see that 
\begin{equation}\label{tmp:12.10_1}
	\D |\B| |A| \ge \E(A, \B) =  \frac{1}{|\SL_2 (\F_q)|} \sum_{\rho} d_\rho \| \FF{A}^* (\rho) \FF{\B} (\rho) \|^2 
	= 
	 \frac{q}{|\SL_2 (\F_q)|} \| \FF{A}^* (\tilde{T}_1) \FF{\B} (\tilde{T}_1) \|^2 
	 \,,
%	 := \sigma  \,.
\end{equation}
	and, again, similarly for $\| \FF{A} (\tilde{T}_1) \FF{\B} (\tilde{T}_1) \|^2$. 
%	Using the second part of  Lemma \ref{l:B_Wiener} again, we obtain 
%\[
%	\sigma = \frac{q |\B|^2}{|\SL_2 (\F_q)|} \langle \FF{A} (\tilde{T}_1)^* \FF{A} (\tilde{T}_1),  I_1 \oplus Z_{q-1} \rangle 
%	= 
%		\frac{q^2 (q-1)}{q+1}  \| \FF{A} (\tilde{T}_1) I_1 \oplus Z_{q-1} \|^2 \,.
%		%( \FF{A} (\tilde{T}_1)^* \FF{A} (\tilde{T}_1))_{11} 
%%		=
%%			\ge 
%%				 \frac{q^2 (q-1)}{q+1} \| \FF{A} (\tilde{T}_1) \|^2_o \,.
%\]
%	Hence
%\begin{equation}\label{tmp:04.10_1+}
%	\| \FF{A} (\tilde{T}_1) I_1 \oplus Z_{q-1} \|^2_o 
%	\le 2\D \frac{|A|(q+1)}{q} \,.
%\end{equation}
	Now consider the equation $b_1 a' a'' a b_2 = 1$ or, equivalently the equation $a'' a b_2 = (a')^{-1} b_1^{-1}$, where  $a, a',a'' \in A$ and $b_1, b_2 \in  \B$. 
	Clearly, if $A^3 \cap \B = \emptyset$, then this equation has no solutions.   
	Combining Lemma \ref{l:B_Wiener} with bound \eqref{tmp:12.10_1} and calculations as in the proof of Lemma \ref{l:A^n_large}, we see that this equation can be solved provided
\[
 	\frac{q}{|\SL_2 (\F_q)|} | \langle \FF{A}^2 (\tilde{T}_1) \FF{\B} (\tilde{T}_1), \FF{A}^* (\tilde{T}_1) \FF{\B}^* (\tilde{T}_1)  \rangle |
	\le \frac{q}{|\SL_2 (\F_q)|} \|\FF{A}^2 (\tilde{T}_1) \FF{\B} (\tilde{T}_1) \| 
		\cdot \| \FF{A}^* (\tilde{T}_1) \FF{\B} (\tilde{T}_1) \|
	\le 
\]
\[
	\le \frac{q}{|\SL_2 (\F_q)|} \|\FF{A} (\tilde{T}_1) \FF{\B} (\tilde{T}_1)  \| \|\FF{A}^* (\tilde{T}_1) \FF{\B} (\tilde{T}_1)  \| \| \FF{A} \|_o 
	\le
 \D |\B| |A|  \| \FF{A} \|_o  
 <
 \frac{|A|^3 |\B|^2}{|\SL_2 (\F_q)|}  \,.
%	\| \B\|_W \| \FF{A} (\tilde{T}_1) \|^3_o = \| \FF{A} (\tilde{T}_1) \|^3_o \,. 
\] 
	In other words, in view of \eqref{f:Fourier_est} 
	%we need 
	it is enough
	to have 
	%\eqref{tmp:04.10_1+}, we have
\begin{equation}\label{tmp:04.10_2-}
	|A|^4 \ge 2 (q+1)^2 \D^2 \cdot |A| q(q+1) 
\end{equation}
	or, equivalently,  
\begin{equation}\label{tmp:04.10_2}
	2 q (q+1)^3 \D^2  \le |A|^3 \,.
\end{equation}
	Now let us obtain another bound which works well when $\D$ is large. 
	Choose $g\notin \B$ and $\eps \in \{1,-1\}$ such that $\D = |A^\eps_g|$. 
	Using Lemma \ref{l:BgB}, we 
	%obtain
	derive 
\begin{equation}\label{f:B_Ag-}
\E(\B,A^\eps_g) = \sum_{x} r^2_{\B A^\eps_g} (x) \le \sum_{x} r_{\B A^\eps_g} (x) r_{\B g \B} (x) \le (q-1) |\B| |A^\eps_g| \,,
\end{equation}
and hence by the Cauchy--Schwarz inequality, we get 
\begin{equation}\label{f:B_Ag}
|\B A^\eps_g| \ge \frac{|\B|^2 |A^\eps_g|^2}{\E(\B,A^\eps_g)} \ge \frac{|\B| |A^\eps_g|}{q-1} = q \D \,.
\end{equation}	
	Consider the  equation $a_g (a' a'')^\eps =b$, where $b\in \B$, $a_g \in A^\eps_g$ and  $a',a'' \in A$.
	Clearly, if $A^3 \cap \B = \emptyset$, then this equation has no solutions.  
	To solve 
	%this equation 
	$a_g (a' a'')^\eps =b$ 
	it is enough to solve the equation $z  (a' a'')^\eps  = 1$, where now $z\in \B A^\eps_g$.
	%As in the proof of Lemma \ref{l:A^n_large} ( see the discussion at the beginning of this section), 
	Applying the second part of Lemma \ref{l:A^n_large} combining with \eqref{f:B_Ag}, 	we obtain that it is enough to have 
	\[
	8 q^3 (q+1)^3 (q-1)^{2} \le q\D |A|^2 \le |\B A^\eps_g| |A|^2 
	\]
	or, in other words,
	\begin{equation}\label{tmp:04.10_1}
	8 q^2 (q+1)^3 (q-1)^{2} \le \D |A|^2	\,.
	\end{equation}
	Considering the second power of  \eqref{tmp:04.10_1} and multiplying it with \eqref{tmp:04.10_2}, we obtain 
\[
	|A|^{7} \ge 2^{14} q^{18} \ge 2^7 q^5 (q+1)^{9} (q-1)^{4} 
	%\,.
\]
	as required.

	In the general case inequality \eqref{tmp:04.10_2} can be rewritten as 
\[
	|A|^n \ge 2^{n-2} \D^2 (q+1)^n q^{n-2} 
\]
	and using the second part of Lemma \ref{l:A^n_large}, we obtain an analogue of  \eqref{tmp:04.10_1} 
\[
	|A|^{n-1} \D \ge 2^n q^{n-1} (q+1)^n (q-1)^{2} \,.
\]
	Combining the last two bounds, we derive the required result. 
	This completes the proof. 
$\hfill\Box$
\end{proof}

%\begin{remark}
%	In the same way one can  obtain that $A^n \cap \B \neq \emptyset$ provided $|A| \gg q^{2+3/(2n-1)}$ which is better than the condition $|A| \gg q^{2+2/n}$ of Lemma \ref{l:A^n_large}.  
%\end{remark}

\begin{remark}
	It is easy to see that Theorem \ref{t:8/3-c}, as well as Lemma \ref{l:BgB} (and also Lemma \ref{l:B_Wiener}) take place for any Borel subgroup not just for the standard one. 
\label{r:BgB_B*}
\end{remark}

\begin{remark}
	It is easy to see that the arguments of the proof of Theorem \ref{t:8/3-c}  
	give the following combinatorial statement about left/right multiplication of an arbitrary set $A$ by $\B$
	(just combine  bounds \eqref{tmp:04.10_1-} and \eqref{f:B_Ag}), namely,
\begin{equation}\label{f:Borel_sum-product}
	\max\{ |A\B|, |\B A| \} \gg \min\{q^{3/2} |A|^{1/2}, |A|^2 q^{-2} \} \,.
\end{equation} 
\end{remark}

\bigskip

As we have seen by 
%Thus by 
%the previous result 
Theorem \ref{t:8/3-c}
%one can obtain similar results on intersections of $A^n$ and $\B$ for large $n$ but all they have restriction $|A| \gg q^{2+\eps_n}$, where $\eps_n \to 0$ as $n\to \infty$. 
%gives us results on intersections of 
we know that 
$A^n \cap \B \neq \emptyset$ for large $n$ but under the condition $|A| \gg q^{2+\eps}$ for a certain $\eps>0$.
% where $\eps_n \to 0$ as $n\to \infty$. 
For the purpose of the next section we need to break the described  $q^2$--barrier and we do this for  prime $q$, using growth in $\SL_2 (\F_p)$.  
Let us recall quickly what is known about growth of generating sets in $\SL_2 (\F_p)$.
%this group. 
%$\SL_2 (\F_p)$.
% for prime $q$.  
In paper \cite{H} Helfgott  obtained his famous result in this direction 
% on growth in $\SL_2 (\F_p)$  
and  we proved in \cite{RS_SL2} the following form of Helfgott's result.

\begin{theorem}
	Let $A \subseteq \SL_2 (\F_p)$ be a set, $A=A^{-1}$ which generates the whole group. 
	Then $|AAA| \gg |A|^{1+1/20}$. 
	\label{t:Misha_Je}
\end{theorem}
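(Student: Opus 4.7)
The plan is to follow Helfgott's growth strategy, arguing by contradiction with careful bookkeeping of exponents to land on the quantitative bound $1+1/20$. Suppose $|A^3| \le K|A|$ with $K = |A|^{1/20}$, and aim for a contradiction with the hypothesis that $A=A^{-1}$ generates $\SL_2(\F_p)$.

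First I would invoke the non-commutative Plünnecke–Ruzsa (tripling) machinery, together with the Ruzsa triangle inequality stated in the excerpt, to propagate the small-tripling assumption to all fixed bounded products: $|A^n| \le K^{O(n)}|A|$, so $A^k$ for constant $k$ behaves essentially like $A$ in size. This step is standard but is where several unavoidable polynomial losses are incurred.

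Next comes the structural dichotomy. By an escape-from-subvarieties argument, since $A$ generates $\SL_2(\F_p)$, some bounded product $A^k$ contains a regular semisimple element $g$ (distinct eigenvalues in $\overline{\F_p}$, $\tr g \ne \pm 2$). Its centralizer $T$ is a maximal torus. I would split into two cases according to the size of $|A \cap T^{a}|$ over conjugates of $T$. If $A$ has large intersection with some conjugate of a torus or of $\B$ (cf.\ Remark \ref{r:BgB_B*}), then the abelian/solvable structure combined with the generation hypothesis forces growth through a Katz–Tao style argument in $\F_p$. Otherwise, the conjugates $\{aga^{-1} : a \in A\}$ all carry the same trace but differ in their off-diagonal coordinates, producing a set $X \subseteq \F_p$ of size comparable to $|A|/|T|$ that embeds naturally into $A^{O(1)}$ both additively and multiplicatively (through the unipotent and diagonal parts of $\B$ respectively, after conjugating $T$ to a standard torus).

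The final input is sum-product in $\F_p$: for such $X$ with $|X| \le p/2$ one has $\max(|X+X|,|XX|) \gg |X|^{1+c}$ for an explicit $c$, using the Rudnev point-plane incidence bound to get the best currently available exponent. Translating back, this forces $|A^{O(1)}| \gg K' |A|$ with $K' \gg |A|^{1/20}$, contradicting the tripling assumption after applying Plünnecke once more.

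The main obstacle is the exponent optimization: every application of Ruzsa-type inequalities costs a factor in the exponent, the escape-from-subvarieties step costs another, and the pivoting/conjugation step costs more. The $1/20$ in place of Helfgott's original tiny exponent comes from replacing soft sum-product estimates with the sharp Rudnev-type incidence bound and from carefully choosing which product $A^k$ to work in (keeping $k$ as small as possible). Balancing these losses against the sum-product gain is the entire technical content of \cite{RS_SL2}.
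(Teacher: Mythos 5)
First, a point of reference: this paper does not prove Theorem \ref{t:Misha_Je} at all --- it is imported verbatim from \cite{RS_SL2}, and the only things the present text shows are the ingredients of that argument reused later (the trace/conjugacy-class counting of Lemma \ref{l:CS_ineq}, the line and Borel intersection bounds as in Lemma \ref{l:A_cap_B}, the Ruzsa calculus in Theorem \ref{t:Misha_Je_large}). Measured against that argument, your outline follows a genuinely different route: you reproduce Helfgott's original scheme from \cite{H} --- escape from subvarieties, pivoting on a torus, and a final reduction to a sum--product estimate for a set $X\subseteq \F_p$ of size about $|A|/|T|$. The proof that actually yields the exponent $1/20$ never passes through a sum--product statement in $\F_p$; instead it applies Rudnev's point--plane incidence theorem directly to group-theoretic quantities, namely bounds of the shape $|S\cap \mathcal{C}_g|\ll |S^{-1}S|^{2/3}+p$ for conjugacy classes and analogous bounds for lines and tori, combined with the conjugation (Helfgott) map and the Ruzsa triangle inequality --- exactly the chain of steps visible in the proof of Theorem \ref{t:Misha_Je_large} above. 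These are not interchangeable packagings of the same computation: the incidence bound is applied to $A$ inside the group, not to a derived subset of $\F_p$, and this is what makes the explicit exponent possible.

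The genuine gap is quantitative: the number $1/20$ is asserted, never derived. Every step of your sketch carries unspecified exponent losses (``$K^{O(n)}$'' from Pl\"unnecke--Ruzsa, further losses from escape and from pivoting), and the concluding sentence simply states that balancing these against the sum--product gain gives $1/20$, deferring ``the entire technical content'' to \cite{RS_SL2} --- but that content is precisely the theorem to be proved. Moreover, it is doubtful the route you describe can reach $1/20$ at all: running Helfgott's reduction literally, even with the sharpest incidence-based sum--product exponent currently available, the accumulated losses in the transfer to and from the set $X$ are known to produce a far smaller growth exponent; this is why \cite{RS_SL2} reorganizes the argument around energies and incidences inside $\SL_2(\F_p)$ rather than reducing to $\F_p$. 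As it stands, the proposal is a plausible plan for \emph{some} growth exponent, but not a proof of the stated bound $|AAA|\gg |A|^{1+1/20}$.
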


Thus in the case of an arbitrary  symmetric generating set and a prime number $p$ 
Theorem \ref{t:Misha_Je},  combining with Theorem \ref{t:8/3-c}, 
allow to obtain some  bounds which guarantee that $A^n = \SL_2 (\F_p)$.
%(of course they are worse that which are given by Theorem \ref{t:2-c}).
%Moreover 
For example, if $A$ generates $\SL_2(\F_p)$, $A=A^{-1}$, 
%$A\cap \B = \emptyset$ 
and $|A| \gg p^{2-\epsilon}$, $\epsilon < \frac{2}{21}$, 
then $A^n \cap \B \neq \emptyset$ for $n\ge \frac{84-42\epsilon}{2-21\epsilon}$. 
On the other hand, 
the methods from \cite{H}, \cite{RS_SL2} allow to obtain the following result about generation of $\SL_2 (\F_p)$ via large and not necessary symmetric sets
(the condition of non--symmetricity of $A$ is rather crucial for us, see the next section).
%We give the scheme of the proof, full details can be found in \cite{RS_SL2}. 

%For $g\in \SL_2(\F_q)$ let 
%\[
%\mathcal{C}_g = \{s \in \SL_2 (\F_p) ~:~ \tr(s) = \tr(g) \} \,.
%\]

\begin{theorem}
	Let $A\subseteq \SL_2 (\F_p)$ be a generating set, $p\ge 5$ and $|A| \gg p^{2-\epsilon}$, $\epsilon < \frac{2}{25}$. 
	Then $A^n \cap \B \neq \emptyset$ for $n\ge \frac{100-50\epsilon}{2-25 \epsilon}$.  
	Also, $A^n = \SL_2 (\F_p)$, provided $n\ge \frac{144}{2-25 \epsilon}$. 
\label{t:Misha_Je_large}
\end{theorem}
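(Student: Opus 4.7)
The plan is to boost the size of $A$ past the $p^2$ barrier by iterating a growth theorem for non-symmetric generating subsets of $\SL_2(\F_p)$, and then apply the previously established results. Specifically, I would first establish, by adapting the methods of \cite{H,RS_SL2} to drop the symmetry hypothesis, that for any generating $A \subseteq \SL_2(\F_p)$ with $|A|$ not too close to $|\SL_2(\F_p)|$ one has $|A^3| \gg |A|^{1+1/25}$. The loss of exponent from $1/20$ (the symmetric bound of Theorem \ref{t:Misha_Je}) to $1/25$ is to be expected, since the pivot-style arguments must now be run on $A\cup A^{-1}$ (or on mixed products involving $A^{-1}$) and then translated back to pure powers of $A$.

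Starting from $|A|\gg p^{2-\eps}$ and iterating this growth step $k$ times, one gets $|A^{3^k}|\gg p^{(2-\eps)(26/25)^k}$; let $k=k(\eps)$ be the least integer for which the exponent $\alpha:=(2-\eps)(26/25)^k$ exceeds $2$, and set $n_0=3^k$, $c:=\alpha-2>0$. Note that $c$ is a positive number that vanishes only as $\eps$ approaches the threshold $2/25$, which is precisely the condition for the theorem to be non-vacuous and the source of the denominator $2-25\eps$ in the final bounds.

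With $|A^{n_0}| \gg p^{2+c}$ in hand, Theorem \ref{t:8/3-c} applied to $A^{n_0}$ yields $A^{n_0 m}\cap \B \neq \emptyset$ as soon as $4/(3m-2)\le c$, i.e.\ $m \ge (4/c+2)/3$, and unwinding the choice of $k$ and $c$ produces the bound $n \ge (100-50\eps)/(2-25\eps)$. For the stronger conclusion, Lemma \ref{l:A^n_large} applied to $A^{n_0}$ gives $(A^{n_0})^m=\SL_2(\F_p)$ once $|A^{n_0}|^m\gg p^{2m+2}$, i.e.\ $m\gtrsim 2/c$, which after matching the constants delivers $n \ge 144/(2-25\eps)$.

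The main obstacle is the first step, the non-symmetric growth theorem with an explicit constant $1/25$. The pivot arguments of \cite{RS_SL2} use $A=A^{-1}$ in an essential way when transferring growth between $A$ and its conjugates $gAg^{-1}$, and without symmetry one must either pass to $A\cup A^{-1}$ and carefully re-express the resulting growth in terms of pure powers $A^n$, or refine the pivoting to produce growth witnessed directly by a word of the prescribed shape. All the remaining work --- iterating until the exponent exceeds $2$ and invoking Theorem \ref{t:8/3-c} together with Lemma \ref{l:A^n_large} --- is routine bookkeeping once the correct quantitative growth statement is in place.
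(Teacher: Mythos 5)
Your overall strategy---prove a growth estimate for non-symmetric generating sets, push past the $p^2$ barrier, then finish with Theorem \ref{t:8/3-c} and Lemma \ref{l:A^n_large}---is the right one and is the paper's strategy, but as written there are two genuine gaps. First, the entire content of the theorem is the non-symmetric growth estimate, and you only assert that it should follow by ``adapting'' \cite{H}, \cite{RS_SL2}; this is exactly the step the paper has to carry out. It does so without passing to $A\cup A^{-1}$ and without iteration: assuming $|A|\le p^{2+2/35}$ (otherwise Theorem \ref{t:8/3-c} applies directly), it runs the torus/pivot dichotomy on $A$ itself, bounding conjugacy-class intersections of the mixed products $A^2A^{-1}AA^{-2}$ and $AA^{-1}AA^{-1}$ via Lemmas 5 and 11 of \cite{RS_SL2} together with the Ruzsa triangle inequality, and in both cases of the dichotomy obtains $K=|AAA|/|A|\gg |A|^{1/24}$ (the second case uses the explicit size restriction $|A|\le p^{2+2/35}$, a quantification your ``not too close to $|\SL_2(\F_p)|$'' leaves open). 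A single application then gives $|AAA|\gg p^{(2-\epsilon)\cdot\frac{25}{24}}=p^{2+\frac{2-25\epsilon}{24}}$; the threshold $\epsilon<\frac{2}{25}$ and the denominators $2-25\epsilon$ come from this identity, not from a growth exponent equal to $1/25$.

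Second, your bookkeeping does not deliver the stated bounds, so the ``routine unwinding'' is not routine. With a one-step exponent $1+\frac{1}{25}$ you get $|A^3|\gg p^{2+\frac{2-26\epsilon}{25}}$, which exceeds $p^2$ only for $\epsilon<\frac{1}{13}<\frac{2}{25}$, and feeding this into Theorem \ref{t:8/3-c} and Lemma \ref{l:A^n_large} yields $n\ge\frac{104-52\epsilon}{2-26\epsilon}$ and $n\ge\frac{150}{2-26\epsilon}$ respectively (e.g.\ $52$ instead of $50$ as $\epsilon\to 0$), not the claimed $\frac{100-50\epsilon}{2-25\epsilon}$ and $\frac{144}{2-25\epsilon}$. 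Iterating does not rescue this: the final $n$ gets multiplied by $3^k$, and for $\epsilon$ near $\frac{2}{25}$ your iterated surplus $c=\alpha-2$ stays bounded away from $0$, so your resulting bound stays bounded while the theorem's bound correctly blows up as $\epsilon\to\frac{2}{25}$; the two cannot coincide. To recover the statement as claimed you need precisely the one-step estimate $|AAA|\gg|A|^{25/24}$ for non-symmetric generating sets of size at most $p^{2+2/35}$, which is what the paper proves.
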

\begin{proof}
	Put $K=|AAA|/|A|$.
	We can assume that, say, $|A| \le p^{2+2/35}$ because otherwise one can apply Theorem \ref{t:8/3-c}.  
	We call an element $g\in \SL_2 (\F_p)$ to be regular if $\tr(g) \neq 0, \pm 2$ and let $\mathcal{C}_g$ be the correspondent conjugate class, namely, 
%	For $g\in \SL_2(\F_q)$ let 
	\[
	\mathcal{C}_g = \{s \in \SL_2 (\F_p) ~:~ \tr(s) = \tr(g) \} \,.
	\]
	Let $T$ be a maximal torus (in $\SL_2 (\F_p)$ it is just a maximal commutative subgroup) such that there is $g\in T\cap A^{-1}A$ and $g \neq 1$.
	% is regular.
	By \cite[Lemma 5]{RS_SL2} such torus $T_*$, containing a regular element $g$,  exists, 
	%with $\tr(g) \neq 0$, exists, 
	otherwise $K\gg |A|^{2/3}$.  
	Firstly, suppose that for a certain $h\in A$ the torus $T'=hTh^{-1}$ has no such property, i.e., there are no 
	%regular 
	nontrivial 
	elements from $A^{-1}A \cap T'$. 
	Then for the  element $g'=hgh^{-1} \in T'$ (in the case $T=T_*$ the element $g'$ is regular) the projection $a\to ag'a^{-1}$, $a\in A$ is one--to--one.   
	Hence $|A^2 A^{-1} A A^{-2} \cap \mathcal{C}_g| \ge |A|$. 
	By \cite[Lemma 11]{RS_SL2}, we have $|S \cap \mathcal{C}_g| \ll |S^{-1}S|^{2/3} + p$ for any set $S$ and regular $g$. 
	Using the Ruzsa triangle inequality, we obtain 
	%\cite{Ruz}, we obtain 
\begin{equation}\label{tmp:15.10_I}
	|(A^2 A^{-1} A A^{-2})^{-1}(A^2 A^{-1} A A^{-2})| \le |A|^{-1} |A^2 A^{-1} A A^{-3}| |A^3 A^{-1} A A^{-2}|
	=
\end{equation}
%\begin{equation}\label{tmp:15.10_II} 
\[
	=
	|A|^{-1} |A^3 A^{-1} A A^{-2}|^2 \le |A|^{-1} (|A|^{-1} |A^3 A^{-2}| |A^2 A^{-2}| )^2 \le |A|^{-1} (|A|^{-3} |A^4| |A^3|^3 )^2 \le K^{12} |A|
\]
%\end{equation}
	and hence
\[
	|A| \ll |(A^2 A^{-1} A A^{-2})^{-1}(A^2 A^{-1} A A^{-2})|^{2/3} + p \ll K^{8} |A|^{2/3} \,.
\] 
	It gives us $K\gg |A|^{1/24}$.

	%Now 
	In the complementary second case  (see \cite{RS_SL2}) thanks to the fact that $A$ is a generating set, we  
	suppose that for {\it any} $h\in \SL_2(\F_p)$ there is a 
	%regular 
	nontrivial 
	element from $A^{-1}A$ belonging to the torus $hTh^{-1}$. 
	Then $A^{-1}A$ is partitioned between these tori and hence again by \cite[Lemma 11]{RS_SL2}, as well as the Ruzsa triangle inequality, we obtain 
\[
	|(AA^{-1}AA^{-1})^{-1}(AA^{-1}AA^{-1})| \le |A|^{-1} |A^2 A^{-1} A A^{-1}|^2 
	\le
\]
\[ 
	\le
		|A|^{-1} (|A|^{-1} |A^2 A^{-2}| |A^2 A^{-1}|)^2 
		\le |A|^{-1} (|A|^{-3} |A^3|^4)^2    \le K^8 |A| 
\]
	and whence
\[
	K^2 |A| \ge |A^{-1}A| \ge \sum_{h \in \SL_2 (\F_p)/N(T_*)} |A^{-1} A \cap h T_* h^{-1}| 
	\gg
\]
\[ 
	\gg
		p^2 \cdot \frac{|A|}{|(AA^{-1}AA^{-1})^{-1}(AA^{-1}AA^{-1})|^{2/3}} 
	\ge p^2 |A|^{1/3} K^{-16/3} \,,
\]
	where $N(T)$ is the normalizer of any torus $T$, $|N(T)| \asymp |T| \asymp p$.  
	Hence thanks to our assumption $|A| \le p^{2+2/35}$, we have $K \gg p^{3/11} |A|^{-1/11} \gg |A|^{1/24}$. 
	In other words, we always obtain  $|AAA| \gg p^{2+\frac{2-25\epsilon}{24}}$.
	After that apply Theorem \ref{t:8/3-c} to find that $A^n \cap \B \neq \emptyset$ for $n\ge \frac{100-50\epsilon}{2-25 \epsilon}$. 	
	If we use Lemma \ref{l:A^n_large} instead of Theorem \ref{t:8/3-c}, then we obtain  $A^n = \SL_2 (\F_p)$, provided $n\ge \frac{144}{2-25 \epsilon}$. 
	This completes the proof. 
	$\hfill\Box$
\end{proof}

%\begin{remark}
%	In Theorem \ref{t:Misha_Je_large} the condition that $A$ generates $\SL_2 (\F_p)$ is necessary even for small $\epsilon$.  
%	Indeed, if not, then for $\epsilon <1$, we see that $A\subseteq \B_* := c\B c^{-1}$ for a certain $c\in \SL_2(\F_p) \setminus \B$, see \cite[Theorem 6.17, Theorem 6.25]{Dickson} for the structure of subgroups of  $\SL_2(\F_p)$. 
%%It gives, first of all, in view of Lemma \ref{l:BgB} that $|A\B|$, $|\B A| \gg p |A|$. 
%	Nevertheless, let $A$ be a subgroup 
%\end{remark}

\bigskip

Thus for sufficiently small $\epsilon>0$ one can take $n=51$ to get  $A^n \cap \B \neq \emptyset$ (and $n=73$ to obtain $A^n  = \SL_2 (\F_p)$).
In the next section we improve this bound for a special set $A$ but nevertheless the arguments of the proof of Theorem \ref{t:Misha_Je_large} will be used in the proof of Theorem \ref{t:main_intr2} from the Introduction.

\bigskip

We finish this section showing that generating sets $A$ of sizes close to $p^2$ (actually, the condition $|A| =\Omega(p^{3/2+\eps})$ is enough) with small tripling constant $K=|A^3|/|A|$ avoid all Borel subgroups.

\begin{lemma}
	Let $A\subseteq \SL_2 (\F_p)$ be a generating set, $p\ge 5$ and $K=|A^3|/|A|$.
%	$|A| \gg p^{2-\epsilon}$, $\epsilon < \frac{2}{25}$. 
Then for any Borel subgroup $\B_*$ one has $|A\cap \B_*| \le 2p K^{5/3} |A|^{1/3}$.    	
\label{l:A_cap_B}
\end{lemma}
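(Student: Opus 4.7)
The plan is to control $X := A \cap \B_*$ by combining a trace/conjugate class decomposition of $\B_*$ with the conjugation--and--Lemma 11 argument already used in the proof of Theorem~\ref{t:Misha_Je_large}. Concretely, I would first note that for every regular trace $\tau \in \F_p \setminus \{0,\pm 2\}$, the set $\mathcal{C}_\tau \cap \B_*$ consists of exactly two $\U_*$-cosets indexed by the pair of roots $\{\alpha,\alpha^{-1}\}$ of $t^2 - \tau t + 1$, so $|\mathcal{C}_\tau \cap \B_*| = 2p$. Writing $\mathcal{T}(X) = \{\tau : X \cap \mathcal{C}_\tau \neq \emptyset\}$ and absorbing the $O(p)$ contribution from the irregular classes $\tau = 0,\pm 2$, this yields the skeleton bound
\[
|X| \ls 2p \cdot |\mathcal{T}(X)| + O(p).
\]
It therefore suffices to show $|\mathcal{T}(X)| \ls K^{5/3} |A|^{1/3}$, i.e., to bound the number of distinct traces of elements of $X$.

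Since $A$ is generating and $\B_*$ is self-normalizing in $\SL_2(\F_p)$ for $p \ge 5$, there exists at least one $h \in A \setminus \B_*$; the conjugate $h\B_* h^{-1}$ is a distinct Borel and their intersection is a split torus of size $p-1$. Mimicking the first case of the proof of Theorem~\ref{t:Misha_Je_large}, I would locate a regular $g \in X^{-1} X \cap T_*$ (such $g$ exists by pigeonhole on the unipotent fibers of $\pi:\B_* \to T_*$: any two elements of $X$ with the same $\pi$-image produce an element of $\U_* \cap X^{-1}X$, and conversely a coincidence of traces without a fiber-coincidence forces $g \in T_*$). The conjugation map $a \mapsto a g a^{-1}$, $a \in A$, then lands in $\mathcal{C}_g \cap A^2 A^{-1} A A^{-2}$ and is essentially injective modulo the centralizer $T_*$, which itself meets $A^{-1} A$ under the present assumption. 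Applying Lemma~11 of \cite{RS_SL2} to $S = A^2 A^{-1} A A^{-2}$ and invoking the Ruzsa chain controlling $|S^{-1}S|$ by a polynomial power of $K$ times $|A|$, I would then pass this estimate back to a bound on $|\mathcal{T}(X)|$ through the trace/$\U_*$-fiber correspondence.

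The last step is bookkeeping: one balances the Ruzsa power of $K$ in $|S^{-1}S|$ against the $|A|^{2/3}$ arising from the $2/3$ exponent in Lemma~11, composed with the identification $|\mathcal{T}(X)| \ls |\U_* \cap A^{-1}A|$-type quantities coming from the fiber-partition of $X$ inside $\B_*$. Optimising the Ruzsa power $S$ (short enough to avoid a large exponent on $K$, long enough that the conjugation map populates enough of $\mathcal{C}_g$) is precisely what produces the explicit $K^{5/3}|A|^{1/3}$ bound and, finally, $|X| \le 2p K^{5/3}|A|^{1/3}$.

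The principal obstacle I anticipate is this last balancing act: the raw chain of inequalities from (\ref{tmp:15.10_I}) only affords $|S^{-1}S| \ls K^{12}|A|$, which composed with the $2/3$ exponent of Lemma~11 yields an unacceptably large $K^{8}$ rather than $K^{5/3}$. To reach the stated exponent one must either take a considerably shorter Ruzsa product (say working directly with $|A^{-1}A|\ls K^{O(1)}|A|$ via non-commutative Plünnecke--Ruzsa) or refine the conjugation argument so that the torus condition $T_* \cap A^{-1}A \ne \{1\}$ is used to keep the intermediate product set close to $A$ itself, thereby trading generality for a small exponent of $K$.
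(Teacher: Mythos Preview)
Your approach is genuinely different from the paper's and, as you yourself diagnose in the final paragraph, it does not reach the stated exponent $K^{5/3}$. The conjugation/trace route via Lemma~11 of \cite{RS_SL2} inevitably produces long products like $A^2 A^{-1} A A^{-2}$ whose $|S^{-1}S|$ is only controlled by $K^{12}|A|$, and the $2/3$ exponent then yields $K^8$; no amount of bookkeeping or choice of shorter $S$ within that framework will bring this down to $K^{5/3}$, because Lemma~11 requires a conjugate of a regular element inside a symmetrized product and that symmetrization alone already costs more than five applications of Ruzsa.

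The paper's argument bypasses this entirely. Instead of slicing $\B_*$ by trace, it slices by the \emph{diagonal part}: the standard Borel decomposes as $\B = \bigsqcup_{\gamma \in \F_p^*} l_\gamma$ where $l_\gamma = \{(\gamma\, u \mid 0\, \gamma^{-1}) : u \in \F_p\}$ is a single coset of the unipotent. Then one invokes Lemma~7 of \cite{RS_SL2} (not Lemma~11), which bounds the intersection of $A$ with any such line directly: $|A \cap l_\gamma| \le 2|A^3 A^{-1} A|^{1/3}$. A single short Ruzsa chain gives $|A^3 A^{-1} A| \le |A^4||A^{-2}A|/|A| \le K^5 |A|$, so each line contributes at most $2K^{5/3}|A|^{1/3}$ and summing over the $p-1$ values of $\gamma$ gives the result. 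Conjugation then handles a general $\B_*$. The key point you are missing is that the relevant tool is a \emph{line} bound rather than a \emph{conjugacy-class} bound; lines are reached by much shorter products, which is exactly what produces the small exponent on $K$.
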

\begin{proof}
	We obtain the result for the standard Borel subgroup $\B$ and after that apply the conjugation to prove our Lemma in full generality. 
	Let $\gamma \in \F_p^*$ be any number and  $l_\gamma$ be the line 
	$$ l_\gamma = \{ (\gamma u| 0 \gamma^{-1}) ~:~ u\in \F_p \} \subset \SL_2 (\F_p) \,.$$
	By \cite[Lemma 7]{RS_SL2}, we have $|A\cap l_\gamma| \le 2 |A^3 A^{-1} A|^{1/3}$.
	Using the last bound, as well as the Ruzsa triangle inequality, we obtain
\[
	|A\cap \B| \le \sum_{\gamma \in \F^*_p} |A\cap l_\gamma| \le 2 p |A^3 A^{-1} A|^{1/3} 
		\le 
			2p (|A^4||A^{-2} A|/|A|)^{1/3}
				\le	
				2p K^{5/3} |A|^{1/3} \,.
\] 
	This completes the proof. 
$\hfill\Box$
\end{proof}

\begin{remark}
	Examining the proof of Lemma 7 from \cite{RS_SL2} one can equally write $|A\cap l_\gamma| \le 2 |A^3 A^{-2}|^{1/3}$ and hence by the calculations above 
	$|A\cap \B_*| \le 2p K^{4/3} |A|^{1/3}$. 
	Nevertheless, his better estimate has no influence to the final bound in Theorem \ref{t:main_intr}. 
\end{remark}

\begin{remark}
	Bounds for intersections of $A\subseteq \SL_2 (\F_q)$, $K=|A^3|/|A|$ with $g\B_*$, where $g\notin \B_*$ are much simpler and follow from Lemma \ref{l:BgB} (also, see Remark \ref{r:BgB_B*}).
	Indeed, by this result putting $A_* = A\cap g\B_*$, we have 
\[
	K|A|\ge |AA| \ge |A_* A_*| \ge \frac{|A_*|^4}{\E(A^{-1}_*, A_*)} \ge \frac{|A_*|^4}{\E(A^{-1}_*,g\B_*)} \ge \frac{|A_*|^2}{q-1}
\]
without any assumptions on generating properties of $A$. 
\label{r:sB}
\end{remark}

%\section{On modular Zaremba's conjecture} 
\section{On Zaremba's conjecture}

%Now 
In this section we 
apply methods of the proofs of Theorems \ref{t:8/3-c}, \ref{t:Misha_Je_large} to Zaremba conjecture but also we use the specific of this problem, i.e. the special form of the correspondent set of matrices from  $\SL_2 (\F_p)$.

\bigskip 

Denote by $F_M(Q)$  the set of all {\it rational} numbers  $\frac{u}{v}, (u,v) = 1$ from $[0,1]$ with all partial quotients in (\ref{exe}) not exceeding $M$ and with $ v\le Q$:
\[
F_M(Q)=\left\{
\frac uv=[0;b_1,\ldots,b_s]\colon (u,v)=1, 0\leq u\leq v\leq Q,\, b_1,\ldots,b_s\leq M
\right\} \,.
\]
By $F_M$ denote the set of all {\it irrational} 
%real 
numbers  from $[0,1]$ with partial quotients less than or equal to $M$.
From \cite{hensley1992continued} we know that the Hausdorff dimension $w_M$ of the set  $F_M$ satisfies
\begin{equation}
w_M = 1- \frac{6}{\pi^2}\frac{1}{M} -
\frac{72}{\pi^4}\frac{\log M}{M^2} + O\left(\frac{1}{M^2}\right),
\,\,\, M \to \infty \,,
\label{HHD}
\end{equation}
however here we need a simpler result from \cite{hensley1989distribution}, which states that
\begin{equation}\label{oop}
1-w_{M} \asymp  \frac{1}{M}
\end{equation}
with absolute constants in the sign $\asymp$.
Explicit estimates for dimensions of $F_M$  for certain values of $M$ can be found in \cite{jenkinson2004density}, \cite{JP} and in other papers.
For example, see \cite{JP} 
\[
	w_2 = 0.5312805062772051416244686... 
\]
In  papers \cite{hensley1989distribution,hensley1990distribution} Hensley gives the bound
\begin{equation}
|F_M(Q)| \asymp_M Q^{2w_M} \,.
\label{QLOW}
\end{equation}

\bigskip

Now we are ready to prove Theorem \ref{t:main_intr} from the Introduction. 
One has 
\begin{equation}\label{f:continuants_aj}
\left( {\begin{array}{cc}
	0 & 1 \\
	1 & b_1 \\
	\end{array} } \right) 
	\dots 
\left( {\begin{array}{cc}
	0 & 1 \\
	1 & b_s \\
	\end{array} } \right) 
=
\left( {\begin{array}{cc}
	p_{s-1} & p_s \\
	q_{s-1} & q_s \\
	\end{array} } \right) \,,
\end{equation}
where $p_s/q_s =[0;b_1,\dots, b_s]$ and $p_{s-1}/q_{s-1} =[0;b_1,\dots, b_{s-1}]$.
Clearly, $p_{s-1} q_s - p_s q_{s-1} = (-1)^{s}$.
Let $Q=p-1$ and consider the set $F_M(Q)$. 
Any $u/v \in F_M(Q)$ corresponds to a matrix from \eqref{f:continuants_aj} such that  $b_j \le M$.
The set $F_M(Q)$ splits into ratios with even $s$ and with odd $s$, 
%i.e.,  
in other words 
$F_M(Q) =  F^{even}_M(Q) \bigsqcup F^{odd}_M(Q)$. 
%and we take the largest set
%(actually, one can easily show via \eqref{QLOW} that these sets have comparable sizes up to constants).  
Let $A \subseteq \SL_2 (\F_p)$ be the set of matrices of the form above with even $s$.
It is easy to see from \eqref{QLOW}, multiplying if it is needed the set $F^{odd}_M (Q)$ by $(01|1b)^{-1}$, $1\le b \le M$ that 
$|F^{even}_M(Q)| \gg_M |F_M (Q)|  \gg_M Q^{2w_M}$.
%if the largest set is  $F^{even}_M(Q)$, otherwise let $A$ be matrices of the form above with odd $s$, multiplied by $(01|10)$ (this corresponds to permutation of columns of our matrix from \eqref{f:continuants_aj}). 
%Anyway $|A| \ge |F_M(Q)|/2$ and $A \subseteq \SL_2 (\F_p)$.
It is easy to check that  if for a certain $n$ one has $A^n \cap \B \neq \emptyset$, then $q_{s-1}$ 
%or $q_{s}$ 
equals zero modulo $p$ and hence there is $u/v \in F_M ((2p)^n)$ such that $v\equiv 0 \pmod p$.
%Also, notice that 
%In particular, 
In a similar way, 
we can easily assume that for any $g = (ab|cd)\in A$ all entries $a,b,c,d$ are nonzero (and hence by the construction they are nonzero modulo $p$), see, e.g., \cite[page 46]{hensley_SL2} or the proof of Lemma \ref{l:M^3} below (the same paper \cite{hensley_SL2} contains the fact that $A$ is a generating subset of $\SL_2 (\F_p)$). 
Analogously, we can suppose that all $g \in A$ are regular, that is, $\tr(g) \neq 0,\pm 2$. 
%Now put $A=A_* \cup A^{-1}_*$. 
%Then $A=A^{-1}$ and it is easy to see that if for a certain $n$ one has $A^n \cap \B \neq \emptyset$, then either $q_s$ or $q_{s-1}$ equals zero modulo $p$ and hence there is $u/v \in F_M (p^n)$ such that $v\equiv 0 \pmod p$.
Let $K = |AAA|/|A|$ and $\tilde{K} = |AA|/|A| = K^\a$, $0\le \a \le 1$. 
%Clearly, $\tilde{K} \le K$. 

We need to 
%bound 
estimate 
from below cardinality of the set of all possible traces of $A$, that is, cardinality of the set of sums $q_{s} + p_{s-1}$ 
(this expression is called "cyclical continuant").  
Fix $p_{s-1}$ and  $q_s$.
% (the case of fixed $(q_{s-1}, p_s)$ can be considered similarly). 
Then $p_{s-1} q_s - 1 = p_s q_{s-1}$ and thus $p_s$ is a divisor of $p_{s-1} q_s - 1$. 
In particular, the number of such $p_s$ is at most $p^\eps$ for any $\eps>0$. 
But now knowing the pair $(p_s,q_s)$, we  determine   the correspondent matrix \eqref{f:continuants_aj} from $A$ uniquely. 
Hence the number of different pairs $(p_{s-1}, q_s)$ is at least $\Omega_M (p^{-\eps}|F_M(Q)|)$ 
and thus the number of different traces of all matrices from  $A$ is $\Omega_M (p^{-1-\eps} |A|)$. 
Actually, one can improve the last bound to   $\Omega_M (p^{-1} |A|)$.

\begin{lemma}
	The number of all possible sums $q_{s} + p_{s-1}$ is at least $\Omega(|A|/(M^3 p))$. 
\label{l:M^3}
\end{lemma}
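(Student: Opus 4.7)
\smallskip
\noindent\textbf{Proof plan.} My plan is to upper-bound the maximum multiplicity of the trace map $\tau\colon A\to \F_p$, $g\mapsto p_{s-1}(g)+q_s(g)\bmod p$, by $O(M^3 p)$; then the trivial inequality $|\tau(A)|\ge |A|/\max_t|\tau^{-1}(t)|$ yields the required lower bound. The argument has three stages: lift to integer traces, reduce to a per corner-pair count, and then establish the pointwise $O(M^3)$ estimate.

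Every $g\in A$ has integer entries with $0\le p_{s-1}<q_s\le p-1$, so the integer sum $T:=p_{s-1}+q_s$ lies in $[1,2p-2]$ and each residue class modulo $p$ has at most two integer lifts. Hence it suffices to show $\#\{g\in A:p_{s-1}+q_s=T\}\ll M^3 p$ for every integer $T$. For a fixed $T$ the corner pair has the form $(p_{s-1},q_s)=(T-q_s,q_s)$ with $q_s\in(T/2,T]$, so it ranges over at most $p$ candidate pairs $(\alpha,\beta)$. The problem therefore reduces to the pointwise statement: for every $(\alpha,\beta)\in[0,p)^2$, the number of matrices in $A$ whose upper-left entry equals $\alpha$ and whose lower-right entry equals $\beta$ is at most $O(M^3)$.

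For the pointwise bound, the relation $\det g=1$ forces $p_s q_{s-1}=\alpha\beta-1=:N$, so $g$ is determined by the choice of divisor $q_{s-1}$ of $N$. The partial-quotient bound $b_s\le M$ restricts $q_{s-1}$ to the multiplicative window $(\beta/(M+1),\beta)$. The naive divisor estimate $\tau(N)\ll p^{\varepsilon}$ is precisely what produced the weaker bound $\Omega_M(|A|p^{-1-\varepsilon})$ sketched a paragraph earlier, so replacing the factor $p^\varepsilon$ by $M^3$ must exploit the fact that \emph{every} partial quotient of $p_s/q_s$ is $\le M$, not only $b_s$. My proposal is to parametrise admissible divisors by a constant-size block of continued-fraction data at the two ends of $[0;b_1,\ldots,b_s]$: one checks that fixing a triple of outer partial quotients such as $(b_1,b_{s-1},b_s)\in\{1,\ldots,M\}^3$ together with the corners $(\alpha,\beta)$ pins down the entire matrix, essentially by peeling off the outer $M_i$'s and showing the residual inner continuants are forced. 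Because the triple varies over at most $M^3$ values, this gives the desired pointwise bound.

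The main technical obstacle is precisely the last step, namely verifying that a bounded end-block of CF data, combined with $(\alpha,\beta)$, really does determine the whole continued fraction; the outer layers of the CF interact with the inner continuants in a subtle way, especially for long expansions, and the constraint $b_s\ge 2$ must be respected. Once this sub-estimate is established, combining with the two reductions above yields at most $p\cdot O(M^3)=O(M^3 p)$ matrices of $A$ per residue class of $\tau$, and hence $|\tau(A)|\gg|A|/(M^3 p)$ as claimed.
\smallskip
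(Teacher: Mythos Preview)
Your overall architecture is exactly the paper's: reduce to the pointwise claim that for fixed corners $(p_{s-1},q_s)=(\alpha,\beta)$ there are at most $O(M^3)$ matrices in $A$, and then lose a trivial factor of $p$ passing from corner--pairs to sums. So the plan is right, and you have correctly located the whole difficulty in the pointwise $O(M^3)$ estimate.

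The gap is that your proposed mechanism for this estimate --- ``fixing $(b_1,b_{s-1},b_s)$ together with the corners pins down the entire matrix'' --- is asserted but not proved, and in fact is not how the paper proceeds. Knowing $b_1$ and $b_s$ separately confines $p_s$ and $q_{s-1}$ to short multiplicative windows, but intersecting those windows with the hyperbola $p_s q_{s-1}=\alpha\beta-1$ does not by itself give $O(1)$ integer points; this is precisely where the naive $p^{\varepsilon}$ divisor bound enters, and adding $b_{s-1}$ only shrinks the window further without an obvious uniqueness conclusion. So the step you flag as ``the main technical obstacle'' really does need a new idea.

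The paper supplies that idea as follows. Fix only $b_s$ (at most $M$ choices). Writing $p_s=b_s p_{s-1}+p_{s-2}$ and substituting into $p_s q_{s-1}=q_s p_{s-1}-1$ yields the congruence
\[
b_s\, p_{s-1}\, q_{s-1}\ \equiv\ q_s p_{s-1}-1 \pmod{p_{s-2}} \,,
\]
so (away from a degenerate case with $s\ll \log M$) the residue of $q_{s-1}$ modulo $p_{s-2}$ is forced. Combined with the continuant estimate
\[
q_{s-1}=\langle b_1,\dots,b_{s-1}\rangle \le (b_{s-1}+1)(b_1+1)\,p_{s-2}\le (M+1)^2 p_{s-2},
\]
this leaves at most $(M+1)^2$ admissible $q_{s-1}$ for each $b_s$, hence $O(M^3)$ in total. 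Note that $b_1$ and $b_{s-1}$ enter only through the \emph{inequality} $q_{s-1}\le (M+1)^2 p_{s-2}$, not as parameters that are individually fixed; this is why your ``uniqueness from the triple'' heuristic, while morally identifying the right three partial quotients, does not translate directly into a proof. Once you replace your final paragraph by this congruence--plus--size argument, your write-up becomes complete and essentially identical to the paper's.
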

\begin{proof} 
	As above fix 
	%$(p_{s-1}, q_s) \in A$.
	$q_s$ and $p_{s-1}$. 
	It is well--known (see, e.g., \cite{hensley_SL2}) that 
	$q_s = \langle b_1,\dots, b_s \rangle$, 
	$p_s = \langle b_2,\dots, b_s \rangle$, 
	$q_{s-1} = \langle b_1,\dots, b_{s-1} \rangle$, 
	$p_{s-1} = \langle b_2,\dots, b_{s-1} \rangle$, where by $\langle x_1,\dots, x_n \rangle$ we have denoted the corresponding continuant.  
	We know that 
\begin{equation}\label{tmp:01.11_1}
	-p_{s} q_{s-1} = - q_s p_{s-1} + 1 \,.
\end{equation}
	Substituting  the well--known formula $p_{s} = b_s p_{s-1} + p_{s-2}$ into \eqref{tmp:01.11_1}, we obtain  
\begin{equation}\label{tmp:01.11_2}
	-b_s p_{s-1} q_{s-1} \equiv  - q_s p_{s-1} + 1 \pmod {p_{s-2}} \,.
\end{equation}
	and thus for any fixed $b_s \neq 0 \pmod {p_{s-2}}$ the number $q_{s-1}$ is uniquely determined modulo $p_{s-2} = \langle b_2,\dots, b_{s-2} \rangle $.  
	But applying the recurrence formula for continuants again, we get 
\[
	q_{s-1} = b_{s-1} \langle b_1,\dots, b_{s-2} \rangle + \langle b_1,\dots, b_{s-3} \rangle \le (b_{s-1} + 1) \langle b_1,\dots, b_{s-2} \rangle =
\]
\[
	= (b_{s-1} + 1) ( b_1 p_{s-2}  + \langle b_3,\dots, b_{s-2} \rangle) \le (b_{s-1} + 1) (b_1+1) p_{s-2} \,.
\]
%	Hence fixing $b_1,b_{s-1}$ in at most $M^2$ ways, we see that
	It follows that  there are at most $(M+1)^2$ possibilities for $q_{s-1}$. 
	Now if $b_s \equiv 0 \pmod {p_{s-2}}$, then $M\ge b_s \ge p_{s-2} \ge \left(\frac{1+\sqrt{5}}{2}\right)^{s-2}$  and hence $s\ll \log M$. 
	It gives us, say, at most $M^s \ll M^{O(1)} \le |A|/2$   
	%|A|/(M^3 p)$ 
	matrices from $A$.
	% and this is negligible.  
	This completes the proof of the lemma. 
$\hfill\Box$
\end{proof}

\bigskip

Now recall \cite[Lemma 12]{RS_SL2}, which is a variant of the Helfgott map \cite{H} from  \cite{Brendan_rich} (we have already used similar arguments in the proof of Theorem \ref{t:Misha_Je_large}). 
For the sake of the completeness we give the proof of a "statistical"\, 
%variant 
version 
of this result.

\begin{lemma}
	Let $\Gr$ be any group and $A\subseteq \Gr$ be a finite set.
	Then for an arbitrary $g\in \Gr$,  there is $A_0 \subseteq A$, $|A_0| \ge |A|/2$ such that for any $a_0 \in A_0$ the following holds 
	%one has
	\begin{equation}\label{f:CS_ineq}
	|A|/2 \le |{\rm Conj} (g) \cap AgA^{-1}| \cdot |{\rm Centr}(g) \cap a_0^{-1} A| \,.
	\end{equation}
	Here ${\rm Conj} (g)$ is the conjugacy class and ${\rm Centr}(g)$ is the centrlizer of $g$ in $\Gr$.
	\label{l:CS_ineq}
\end{lemma}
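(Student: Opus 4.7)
The plan is to analyze the conjugation map $\phi \colon A \to \Gr$ defined by $\phi(a) := aga^{-1}$. By construction $\phi(a) \in {\rm Conj}(g)$, and trivially $\phi(a) \in AgA^{-1}$, so $\phi(A) \subseteq {\rm Conj}(g) \cap AgA^{-1}$; set $N := |{\rm Conj}(g) \cap AgA^{-1}|$. The first main step is to describe the fibres of $\phi$ exactly: if $\phi(a) = \phi(a_0)$, then $(a_0^{-1} a)\,g = g\,(a_0^{-1} a)$, i.e.\ $a_0^{-1} a \in {\rm Centr}(g)$, and conversely every $a \in a_0 {\rm Centr}(g) \cap A$ lies in the same fibre. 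Hence for each $a_0 \in A$,
\[
|\phi^{-1}(\phi(a_0))| \;=\; |A \cap a_0\, {\rm Centr}(g)| \;=\; |{\rm Centr}(g) \cap a_0^{-1} A|,
\]
so the right-hand side of \eqref{f:CS_ineq} is exactly $N \cdot |\phi^{-1}(\phi(a_0))|$, and what must be shown is that $|\phi^{-1}(\phi(a_0))| \ge |A|/(2N)$ for a set of $a_0$ of size at least $|A|/2$.

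The second step is a one-line pigeonhole. Let $A_0 \subseteq A$ be the union of all fibres of $\phi$ of cardinality at least $|A|/(2N)$; this is by construction a union of whole fibres, so the property $|\phi^{-1}(\phi(a_0))| \ge |A|/(2N)$ either holds for all points of a given fibre or for none. The complement $A \setminus A_0$ is covered by at most $|\phi(A)| \le N$ fibres, each of cardinality strictly less than $|A|/(2N)$, hence
\[
|A \setminus A_0| \;<\; N \cdot \frac{|A|}{2N} \;=\; \frac{|A|}{2},
\]
so $|A_0| > |A|/2$. For every $a_0 \in A_0$, rearranging $|\phi^{-1}(\phi(a_0))| \ge |A|/(2N)$ yields precisely \eqref{f:CS_ineq}.

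There is no genuine obstacle in this argument: all the content lies in (i) recognising that the conjugation map realises $A$ as a multi-section of ${\rm Conj}(g) \cap AgA^{-1}$ whose fibres are intersections of $A$ with left cosets of ${\rm Centr}(g)$, and (ii) applying the pigeonhole to a \emph{union of fibres} so that $A_0$ is well-defined and the counting is clean. The label \texttt{f:CS\_ineq} suggests the author may prefer to package this as a Cauchy--Schwarz estimate on $\sum_{y} |\phi^{-1}(y)|^{2} \ge |A|^{2}/N$, but the level-set pigeonhole above is equivalent and yields exactly the factor $1/2$ stated in the lemma.
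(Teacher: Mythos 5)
Your proposal is correct and follows essentially the same route as the paper: both use the Helfgott conjugation map $a \mapsto a g a^{-1}$, identify its fibres as $A \cap a_0\,{\rm Centr}(g)$ (equivalently $|{\rm Centr}(g)\cap a_0^{-1}A|$), and take $A_0$ to be the union of fibres of size at least $|A|/(2|{\rm Conj}(g)\cap AgA^{-1}|)$, with the same pigeonhole count giving $|A_0|\ge |A|/2$. The only difference is cosmetic: you bound the complement of $A_0$ from above, while the paper bounds the sum over large fibres from below.
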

\begin{proof} 
	Let $\_phi : A \to  {\rm Conj} (g) \cap AgA^{-1}$ be the Helfgott map $\_phi(a) := a g a^{-1}$. 
	One sees that $\_phi(a) = \_phi (b)$ iff 
	\[
	b^{-1} a g = g b^{-1} a \,.
	\]
	In other words, $b^{-1} a \in {\rm Centr}(g) \cap A^{-1} A$. 
	Clearly, then
	\[
	|A| = \sum_{c\in {\rm Conj} (g) \cap AgA^{-1}} |\{ a\in A ~:~ \_phi (a) = c\}| 
	\le
	\]
	\begin{equation}\label{tmp:01.11_10}
	\le 
		2 \sum_{c\in {\rm Conj} (g) \cap AgA^{-1} ~:~ |\{ a\in A ~:~ \_phi (a) = c\}| \ge |A|/(2|{\rm Conj} (g) \cap AgA^{-1}|)} |\{ a\in A ~:~ \_phi (a) = c\}|	\,.
	\end{equation}
	For $c\in \_phi (A) \subseteq {\rm Conj} (g) \cap AgA^{-1}$ put $A (c) = \_phi^{-1} (c) \subseteq A$ and let 
	$$
		A_0 = \bigsqcup_{c ~:~ |A (c)| \ge |A|/(2|{\rm Conj} (g) \cap AgA^{-1}|)} A (c) \,.
	$$
	In other words, estimate \eqref{tmp:01.11_10} gives us
\[
	|A_0| = \sum_c |A (c)| \ge |A|/2 \,.
%	 |\{ a\in A ~:~ |\{ a\in A ~:~ \_phi (a) = c\}| \ge |A|/(2|{\rm Conj} (g) \cap AgA^{-1}|) \}| \ge |A|/2 \,.
\]
	But for any $b\in A_0$ one has $|{\rm Centr}(g) \cap b^{-1} A| \ge  |A|/(2|{\rm Conj} (g) \cap AgA^{-1}|)$ 
	as required. 
	This completes the proof of the lemma. 
$\hfill\Box$
\end{proof}

\bigskip 

Now summing inequality \eqref{f:CS_ineq} over all  $g\in A$ with different traces, we obtain in view of the Ruzsa triangle inequality and Lemma \ref{l:M^3} that 
\begin{equation}\label{tmp:15.10_1}
	|A|^2 p^{-1} \ll_M |AAA^{-1}| \cdot \max_{g\in A}  |{\rm Centr}(g) \cap a_0^{-1}(g) A| 
		\le	
			K \tilde{K} |A| \cdot \max_{g\in A}  |{\rm Centr}(g) \cap a_0^{-1}(g) A|  \,.
\end{equation}
	Here for every $g\in A$ we have taken a concrete $a_0 (g) \in A_0 (g)$ but in view of  Lemma \ref{l:CS_ineq} it is known that there are a lot of them and we will use this fact a little bit later. 
	Now by \cite[Lemma 4.7]{H}, we see 
	%know 
	that 
\[
	%|A^2 A^{-1} A A^{-2}| \ge 
	|(a_0^{-1}(g) A) g_* (a_0^{-1}(g) A) g^{-1}_* (a_0^{-1}(g) A)^{-1}| \gg |{\rm Centr}(g) \cap a_0^{-1}(g) A|^3 \,,
\]
	where $g_* = (ab|cd)$ is any element from $A$ such that $abcd \neq 0$ in the basis where $g$ has the diagonal form.   
	Thanks to Lemma \ref{l:A_cap_B} and Remark \ref{r:sB} we can choose $g_* = a_0 (g)$, otherwise $|A| \ll p^{3/2} K^{5/2}$. 
	In the last case if, say, $|A| \gg p^{2-1/35}$, then $K\gg p^{33/175}$ and hence $|A^3| \gg p^{2+4/25}$. 
	Using Theorem \ref{t:8/3-c}, we see that one can take $n=27$  and this is better than we want to prove. 
	Then with this choice of $g_*$, we have  by the Ruzsa triangle inequality 
\[
	|A^2 g^{-1}_* A^{-1}| \le |A^2 A^{-2}| \le K^2 |A| \,, 
\]
%	As in \eqref{tmp:15.10_I}, \eqref{tmp:15.10_II}, we get
%\[
%	|A^2 A^{-1} A A^{-2}| \le |A|^{-1} |A^2 A^{-2}|^2 \le |A|^{-3} |AAA|^4 \le K^4 |A| \,, 
%\]
	and hence $|{\rm Centr}(g) \cap a_0^{-1}(g) A| \ll K^{2/3} |A|^{1/3}$. 
%	Returning to \eqref{tmp:15.10_1} and 
	Substituting the last bound into \eqref{tmp:15.10_1}, we get 
\begin{equation}\label{tmp:15.10_1-}
	|A|^2 p^{-1} \ll_M K \tilde{K} |A| \cdot  K^{2/3} |A|^{1/3}
\end{equation}
	and hence 
\begin{equation}\label{tmp:15.10_2}
	K \gg_M (|A|^2 p^{-3})^{\frac{1}{5+3\a}} \gg p^{\frac{4w_M}{5+3\a} - \frac{3}{5+3\a}} \,.
\end{equation}
	In other words, $|AAA| \gg_M p^{2+\frac{w_M (14+6\a) - 13- 6\a}{5+3\a}}$. 
	Take $M$ sufficiently large 
	%and $\eps$ sufficiently small 
	such that $w_M (14+6\a) - 13- 6\a  >0$. 
	Using Theorem \ref{t:8/3-c}, we see that for any 
\begin{equation}\label{f:n_1}
	n\ge \frac{w_M (28+12\a)- 6}{w_M (14+6\a) - 13- 6\a} 
\end{equation} 
	one has  $A^n \cap \B \neq \emptyset$. 
	On the other hand, from \eqref{tmp:15.10_2}, we get
\[
	|AA| = |A|K^\a \gg p^{2+ \frac{w_M(10+10\a) -10 - 9 \a}{5+3\a}} \,.
\]
	Suppose that $w_M(10+10\a) -10 - 9 \a > 0$. 
	It can be done if $\a>0$ and if we take  sufficiently large $M$.
	%(and $\eps$ sufficiently small).
	Applying Theorem \ref{t:8/3-c} one more time, we derive that for any 
\begin{equation}\label{f:n_2}
	n \ge \frac{2}{3} \cdot \frac{w_M(20+20\a) - 6 \a}{w_M(10+10\a) -10 - 9 \a} 
\end{equation}
	one has  $A^n \cap \B \neq \emptyset$. 
	Comparing \eqref{f:n_1} and \eqref{f:n_2}, we choose $\a$ optimally when
\[
	\a^2 (120 w^2_M - 12w_M - 72) + \a(400 w_M^2 -368 w_M + 6) 
%	+
%\]
%\[ 
	+
	280 w_M^2 + 180 - 500 w_M = 0 
\]
	and it gives 
\[
	18 \a^2 + 19 \a - 20 = 0
\]
	and whence
	$\a = \frac{-19+\sqrt{1801}}{36} + o_M (1)$ as 
	%$\eps \to 0$ and 
	$M\to +\infty$.   
	Hence from \eqref{f:n_1}, say, we obtain $n\ge \frac{47+\sqrt{1801}}{3} + o_M (1) > 29.81 + o_M (1)$.
	Taking sufficiently large $M$,
	% and $\eps$ sufficiently small, 
	we can choose  $n=30$.  
	If $\a=0$, then for sufficiently large $M$ estimate \eqref{f:n_1} allows us to take $n = 23$. 
	This completes the proof. 
	$\hfill\Box$

\bigskip 
	
Combining the arguments 
%of this section 
above 
with Theorems \ref{t:8/3-c}, \ref{t:Misha_Je_large},  we obtain Theorem \ref{t:main_intr2} from the Introduction.
Actually, if we apply the second part of Theorem \ref{t:Misha_Je_large}, then we generate the whole $\SL_2 (\F_p)$ (and this differs our method from \cite{MOW_Schottky}, say). 
Because in the case $\zk =2$ we use results about growth in $\SL_2 (\F_p)$ for relatively small asymmetric set $A$ ($|A| \gg p^{2w_2} \gg p^{1.062}$)
our absolute constant $C$ is
% rather 
large. 
It is easy to see that the arguments of this section on trace of the set $A$ begin to work for $w_M > 3/4$ (see  Lemma \ref{l:A_cap_B},  
%and  
as well as 
estimates \eqref{tmp:15.10_1}, \eqref{tmp:15.10_1-}) and in this case the constant $C$ can be decreased, although it remains rather large.

%\section{Concluding remarks}


\begin{thebibliography}{99}
	
	
\bibitem{bourgain2011zarembas} 	
{\sc J. Bourgain, A. Kontorovich, } 
{\em On Zaremba's conjecture, } C. R. Math. Acad. Sci. Paris, 349(9--10):493--495, 2011. URL: https://doi.org/10.1016/j.crma.2011.03.023, doi:10.1016/j.crma.2011.03.023.


\bibitem{bourgain2014zarembas} 	
{\sc J. Bourgain, A. Kontorovich, } 
{\em On Zaremba's conjecture, } Ann. of Math. (2), 180(1):137--196, 2014. URL: https://doi.org/10.4007/annals.2014.180.1.3.
	

\bibitem{Frobenius}
{\sc G. Frobenius, }
{\em \"Uber Gruppencharaktere, Sitzungsberichte der K\"oniglich 
	Preu\ss ischen
	Akademie der Wissenschaften zu Berlin, }
1896, 985--1021.


\bibitem{FK}
{\sc D. A. Frolenkov, I. D. Kan, }
{\em A strengthening of a theorem of Bourgain-Kontorovich II, } 
Mosc. J. Comb. Number Theory, 4(1):78--117, 2014.


\bibitem{H}
{\sc H. Helfgott, }
{\em Growth and generation in $\SL_2(Z/pZ)$, } Annals of Math. 167 (2008), no. 2, 601--623.


\bibitem{hensley1989distribution}
{\sc D. Hensley, }
{\em The distribution of badly approximable numbers and continuants with bounded digits, } 
In {\it Th\'eorie des nombres} (Quebec, PQ, 1987), pages 371--385, de Gruyter, Berlin, 1989.
	

\bibitem{hensley1990distribution}
{\sc D. Hensley, }
{\em The distribution of badly approximable rationals and continuants with bounded digits II,} 
J. Number Theory, 34(3):293--334, 1990. URL: https://doi.org/10.1016/0022-314X(90)90139-I, doi:10.1016/0022-314X(90)90139-I.


\bibitem{hensley1992continued}
{\sc D. Hensley, }
{\em Continued fraction Cantor sets, Hausdorff dimension, and functional analysis, }
J. Number Theory, 40(3):336--358, 1992. URL: https://doi.org/10.1016/0022-314X(92)90006-B, doi:10.1016/0022-314X(92)90006-B. 


\bibitem{hensley_SL2}
{\sc D. Hensley, }
{\em The distribution mod $n$ of fractions with bounded partial quotients, }
Pacific J. Math., Vol. 166 (1):43--54, 1994.


\bibitem{hensley1996}
{\sc D. Hensley, }
{\em A polynomial time algorithm for the Hausdorff dimension of continued fraction Cantor sets, } 
J. Number Theory, 58(1):9--45, 1996.


\bibitem{jenkinson2004density}
{\sc O. Jenkinson, } 
{\em On the density of Hausdorff dimensions of bounded type continued fraction sets: the Texan conjecture, } 
Stoch. Dyn., 4(1):63--76, 2004. URL: https://doi.org/10.1142/S0219493704000900, doi:10.1142/S0219493704000900.



\bibitem{JP}
{\sc O. Jenkinson, M. Pollicott, }
{\em Computing the dimension of dynamically defined sets: $E_2$ and bounded continued fractions, }
Ergodic Theory Dynam. Systems, 21(5):1429--1445, 2001.



\bibitem{KanIV}
{\sc I. D. Kan, } 
{\em A strengthening of a theorem of Bourgain and Kontorovich. IV, } 
Izv. Ross. Akad. Nauk Ser. Mat., 80(6):103--126, 2016. URL: https://doi.org/10.4213/im8360, doi:10.4213/im8360.


\bibitem{Korobov}
{\sc N. Korobov, } 
{\em Number-theoretic methods in numerical analysis, } 
Fizmatgis, Moscow, 37, 1963.


\bibitem{MOW_MIX}
{\sc M. Magee, H. Oh, D. Winter, }
{\em  Expanding maps and continued fractions, } arXiv:1412.4284 (2014).


\bibitem{MOW_Schottky} 
{\sc M. Magee, H. Oh, D. Winter, }
{\em Uniform congruence counting for Schottky semigroups in $\SL_2 (\Z)$, } 
Journal f\"ur die reine und angewandte Mathematik (Crelles Journal) 2019.753 (2019): 89--135.


\bibitem{Mosh_A+B}
{\sc N. G. Moshchevitin, }
{\em Sets of the form $A+B$ and finite continued fractions, } 
Sbornik:Mathematics, 198(4):95--116, 2007. URL: http://stacks.iop.org/1064-5616/198/i=4/a=A05.


\bibitem{NG_S}
{\sc  N.G. Moshchevitin, B. Murphy, I.D. Shkredov, }
{\em Popular products and continued fractions, } Israel Journal of Mathematics, accepted; arXiv:1808.05845v2 [math.NT] 23 Aug 2018.


\bibitem{Brendan_rich}
{\sc B. Murphy, }
{\em Upper and lower bounds for rich lines in grids, } arXiv:1709.10438v1 [math.CO] 29 Sep 2017.


\bibitem{Naimark}
%{\sc M.A. Naimark, A.I. Stern, }
%{\em Theory of group representations, } New York: Springer, 1982. 
{\sc M.A. Naimark,  } 
{\em Theory of group representations, } Moscow:Fizmatlit., 2010, ISBN: 978-5-9221-1260-4. 


\bibitem{Nied}
{\sc H. Niederreiter, } 
{\em Dyadic fractions with small partial quotients, } 
Monatsh. Math., 101(4):309--315, 1986. URL: https://doi.org/10.1007/BF01559394, doi:10.1007/BF01559394.



\bibitem{RS_SL2}
{\sc M. Rudnev, I.D. Shkredov, }
{\em On growth rate in $\SL_2(\F_p)$, the affine group and sum-product type implications, }
arXiv:1812.01671v3 [math.CO] 26 Feb 2019. 


\bibitem{Ruz}{\sc I. Z. Ruzsa,} {\em Sums of Finite Sets}, In: Chudnovsky D.V., Chudnovsky G.V., Nathanson M.B. (eds) Number Theory: New York Seminar 1991--1995. Springer, New York, NY.


\bibitem{SX}
{\sc P. Sarnak, X. Xue, }
{\em Bounds for multiplicities of automorphic representations, }
Duke Math. J. 64 (1991), 207--227.


\bibitem{Serr_representations}
{\sc J.-P. Serr, }
{\em Repr\'esentations lin\'eaires des groupes finis, }
Collections M\'ethodes, Hermann, Paris, 1967. 


\bibitem{sh_as}
{\sc I. D. Shkredov, }
{\em On asymptotic formulae in some sum--product questions, } Tran. Moscow Math. Soc, 79(2) (2018), 271--334; English transl. Trans. Moscow Math. Society 2018, pp. 231--281.



%\bibitem{Dickson}
%{\sc M. Suzuki}, 
%{\em Group Theory I, } Springer--Verlag, New York, 1982.



\bibitem{zaremba1972methode} 
{\sc S. K. Zaremba, }
{\em La m\'ethode des "bons treillis" pour le calcul des int\'egrales multiples, }
Academic Press, New York, 1972.

\end{thebibliography}
\end{document}